\newcommand{\eps}{\varepsilon}
\newcommand{\N}{\mathbb{N}}
\newcommand{\R}{\mathbb{R}}
\newcommand{\E}{\mathbb{E}}
\newcommand{\X}{\mathcal{X}}
\theoremstyle{definition}
\newcommand{\sto}{\mathrm{sto}}
\newcommand{\leqc}{\preceq_{c}} 
\newcommand{\leqs}{\preceq_\sto} 
\newcommand{\leqcs}{\preceq_{c,\sto}} 
\newcommand{\dg}{\mathsf{d}}
\newcommand{\Id}{\mathrm{Id}}
\newcommand{\B}{\mathcal{B}}
\theoremstyle{plain}
\newtheorem{thm}{Theorem}[section]
\newtheorem{lem}{Lemma}[section]
\newtheorem{prop}{Proposition}[section]
\newtheorem{cor}{Corollary}[section]
\newtheorem{defi}{Definition}[section]
\newtheorem{rem}{Remark}[section]
\definecolor{boubelcolor}{rgb}{.65,0.05,0}
\begin{document}
\title[On a mixture of Brenier and Strassen Theorems]{On a mixture of Brenier and Strassen Theorems}
\author{Nathael Gozlan}
\author{Nicolas Juillet}
\date{\today}

\date{\today}

\address{
NG : Universit\'e Paris Descartes, MAP5, UMR 8145, 45 rue des Saints Pères, 75270 Paris Cedex 06}
\email{natael.gozlan@parisdescartes.fr}
\address{
NJ: Universit\'e de Strasbourg, IRMA, UMR 7501, 7 rue René-Descartes 67084 Strasbourg Cedex}
\email{nicolas.juillet@math.unistra.fr}

\keywords{Optimal transport, Martingales, Brenier Theorem, Strassen Theorem}
\subjclass{60A10, 49J55, 60G42}

\maketitle
\begin{abstract} We give a characterization of optimal transport plans for a variant of the usual quadratic transport cost introduced in \cite{GRST17}. Optimal plans are composition of a deterministic transport given by the gradient of a continuously differentiable convex function followed by a martingale coupling. We also establish some connections with Caffarelli's contraction theorem \cite{Caffarelli00}.
\end{abstract}

\section{Introduction}
Given two probability measures $\mu,\nu$ on $\R^d$, we recall that a \emph{transport plan} between $\mu$  and $\nu$ is a probability measure $\pi$ on $\R^d\times \R^d$ such that 
\[
\pi(A \times \R^d) = \mu(A) \qquad \text{and}\qquad \pi(\R^d\times B) = \nu(B),
\]
for all Borel sets $A,B$ of $\R^d.$ The set of all transport plans between $\mu$ and $\nu$ will be denoted by $C(\mu,\nu)$ in all the paper. It will be convenient to represent a transport plan $\pi \in C(\mu,\nu)$ in the following disintegrated form 
\begin{equation}\label{dis}
d\pi(x,y) = d\mu(x)dp_x(y),
\end{equation}
where $x\mapsto p_x$ is a ($\mu$-almost surely unique) probability kernel called a \emph{transport kernel}. A couple $(X,Y)$ of random variables such that $X\sim \mu$ and $Y\sim \nu$ is called a \emph{coupling} of $\mu$ and $\nu$. By a slight abuse of vocabulary, we also call couplings of $\mu$ and $\nu$ the transport plans of $C(\mu,\nu)$, which also explains the notation with letter $C$. In all the paper, $|\,\cdot\,|$ will denote the standard Euclidean norm on $\R^d$. The corresponding scalar product will be denoted by $x\cdot y$ or $\langle x,y\rangle$ when the expression of $x$ or $y$ is too long. For $k\geq 1$, we will denote by $\mathcal{P}_k(\R^d)$ the \emph{Wasserstein space} of order $k$, i.e the set of probability measures $\mu$ such that $\int |x|^k\,d\mu(x)<\infty$.

\subsection{Existence of structured couplings between probability measures} The question to construct couplings between probability measures having some nice structures or properties is natural both in probability theory and analysis. Let us recall two famous results guarantying the existence of couplings with rigid but very different structures, namely the theorems of Brenier and Strassen.

A fundamental result by Brenier \cite{Brenier87,Brenier91} (see \cite[\S 1.3]{McCG} for a discussion of the bibliography around this result) shows that if $\mu$ is say absolutely continuous with respect to Lebesgue measure, then there exists some convex function $\varphi : \R^d \to \R$ such that $\nabla \varphi$ pushes forward $\mu$ onto $\nu$. In other words, there exists a \emph{deterministic} coupling $\pi^\circ$ of the form 
\[
d\pi^\circ(x,y) = d\mu(x)\delta_{\nabla \varphi(x)}(y).
\]
Moreover, assuming in addition that $\mu$ and $\nu$ belong to $\mathcal{P}_2(\R^d)$, this $\pi^\circ$ is the unique optimal coupling in the Monge-Kantorovich transport problem for the quadratic cost:
\[
\iint |x-y|^2 \,d\pi^\circ(x,y) = \inf_{\pi \in C(\mu,\nu)} \iint |x-y|^2\,d\pi(x,y).
\]
This result has then been extended to various cost functions \cite{GMc96} and state spaces \cite{McCann01,Gigli11, AmRi, FiRi, FeUes, FaSh, Bertrand, GRS} and has had numerous applications in PDE, geometric analysis or probability theory, see \cite{AGS08, BGL14, Vil03,Vil09} and the references therein.

On the other hand, from a probabilistic point of view it is natural to investigate the existence of \emph{martingale couplings} between elements of $\mathcal{P}_1(\R^d)$, that is to say to look for couplings $\pi \in C(\mu,\nu)$ which correspond to the law of a martingale $(X_{t})_{t \in \{0;1\}}$. This martingale requirement means that the kernel $p$ appearing in \eqref{dis} satisfies
\[
\int y\,dp_x(y)=x,\quad\text{for } \mu \text{ almost every }x \in \R^d.
\]
As for the Brenier Theorem, it turns out that the existence of a martingale coupling between $\mu$ and $\nu$ is not automatic. Given $\mu,\nu \in \mathcal{P}_1(\R^d)$, a necessary and sufficient condition is given by Strassen Theorem \cite{Str65}: there exists a martingale coupling between $\mu$ and $\nu$ if and only if $\mu$ is dominated by $\nu$ in the convex order. Recall that if $\mu,\nu \in \mathcal{P}_1(\R^d) $, one says that $\mu$ is dominated by $\nu$ in the convex order if $\int f\,d\mu \leq \int f\,d\nu$ for all convex functions $f:\R^d\to\R.$ \footnote{Note that since a convex function $f$ on $\R^d$ is always bounded from below by some affine function, the integral of $f$ with respect to a probability measure with finite first moment always makes sense in $\R \cup \{+\infty\}$.} This is denoted as follows in the what follows: $\mu\leqc \nu$. Note that Strassen's result has been generalized at the time continuous process level by Kellerer \cite{Kel72,Kel73} (see also \cite{BJ2}): if a family $(\mu_t)_{t\geq 0}$ of elements of $\mathcal{P}_1(\R)$ is increasing for the convex order (a so-called \emph{peacocks} using the terminology of \cite{HiYo10}), then there exists a process $(X_t)_{t\geq0}$ which is together \emph{Markovian} and a martingale such that $X_t \sim \mu_t$ for all $t\geq0.$  See the monograph \cite{HPRY11} on peacocks, \cite{Low,HiRoYo14} for the approach by Lowther and \cite{Ju_seminaire,HiRo13,BouJui} for extensions.

\subsection{In between Brenier and Strassen}\label{sub:BaS}
Given two probability measures $\mu,\nu$ on $\R^d$ having their first moments finite, it is not always possible to go from $\mu$ to $\nu$ using a deterministic mapping given by the gradient of a convex function, nor to go from $\mu$ to $\nu$ using a martingale coupling. 
In this paper we will be interested in couplings obtained by composition of these two classical  transport methods, which as we will see in the next paragraph always exist.

More precisely, we will look for couplings $(X,Y)$ of $\mu$ and $\nu$ such that there exists a convex function $\varphi:\R^d\to \R$ of class $\mathcal{C}^1$ such that 
\begin{equation}\label{eq:Econd}
\E[Y|X] = \nabla \varphi(X),\qquad \text{a.s.}
\end{equation}
Introducing $X' = \nabla\varphi(X)$, we can see $(X,Y)$ as the initial and final states of a (time inhomogeneous) Markov chain 
$(X,X',Y)$ where the first transition step is deterministic and given by the gradient of a convex function and the second transition step is a martingale. As mentioned above, such couplings always exist: simply choosing  $X$ and $Y$ independent gives a trivial example corresponding to $\varphi(x)=x\cdot \E[Y]$, $x \in \R^d.$

The purpose of our main result (Theorem \ref{main-result} below) is therefore to distinguish some \emph{special} couplings $(X,Y)$ among those satisfying \eqref{eq:Econd} by showing that they are solutions of an optimal transport problem that we shall now present.


Let $\theta : \R^d \to \R^+$ be some convex function and $\mu,\nu \in \mathcal{P}_1(\R^d)$ ; using the terminology of \cite{GRST17}, the barycentric optimal transport cost between $\mu$ and $\nu$ is defined by
\begin{equation}\label{eq:barcost}
\overline{\mathcal{T}}_\theta(\nu|\mu) =\inf_{\pi \in C(\mu,\nu)} \int \theta\left(\int y \,dp_x(y)-x\right)\,d\mu(x),
\end{equation}
where $p$ is the kernel appearing in the disintegration formula \eqref{dis} for $\pi$. Notice that the bar over $\mathcal{T}_\theta$ in \eqref{eq:barcost} stands for the beginning of the word \emph{bar}ycentric.
In probabilistic notations, this optimal transport cost can be expressed as follows:
\[
\overline{\mathcal{T}}_\theta(\nu|\mu) = \inf \E\left[ \theta \left( \E[Y|X]-X\right)\right]
\]
where the infimum runs over the set of random vectors $(X,Y)$ such that $X \sim \mu$ and $Y \sim \nu.$ If $d\pi(x,y) = d\mu(x)dp_x(y) \in C(\mu,\nu)$ is such that $\overline{\mathcal{T}}_\theta(\nu|\mu) = \int \theta\left(\int y \,dp_x(y)-x\right)\,d\mu(x)$, we call it an \emph{optimal} transport plan (or coupling) from $\mu$ to $\nu$ (for the cost $\overline{\mathcal{T}}_\theta$). These barycentric costs are actually part of a more general family of transport costs, called ``weak transport costs" or ``generalized transport costs'', introduced under the first name by the first author together with Roberto, Samson and Tetali in \cite{GRST17} (see Section \ref{sec:weak} below for a more complete presentation). 

In this paper, we will mainly focus on the barycentric transport cost, denoted by $\overline{\mathcal{T}}_2$ in the remainder of the present text, in relation to $\overline{\mathcal{T}}_\theta$ defined in \eqref{eq:barcost}, as it is associated to the function $\theta(u) = |u|^2$, $u\in \R^d$. For all $\mu,\nu \in \mathcal{P}_1(\R^d)$
\[
\overline{\mathcal{T}}_2(\nu|\mu) = \inf_{\pi \in C(\mu,\nu)} \int \left|\int y \,dp_x(y)-x\right|^2\,d\mu(x).
\]
In \cite[Theorem 2.11]{GRST17}, the following Kantorovich type duality formula has been obtained:
\begin{thm}\label{them:duality}
If $\mu,\nu \in \mathcal{P}_1(\R^d)$ then
\[
\overline{\mathcal {T}}_\theta(\nu|\mu) = \sup_{f} \left\{ \int Q_\theta f\,d\mu - \int f \,d\nu\right\},
\]
where
\[
Q_\theta f(x) = \inf_{y\in \R^d}\{f(y) + \theta(y-x)\},\qquad x\in \R^d,
\]
and where the supremum runs over the set of all functions $f$ which are \emph{convex}, Lipschitz and bounded from below.
\end{thm}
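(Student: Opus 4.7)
The plan is to establish the two inequalities separately.

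For the easy direction $\sup_f\{\cdots\}\le\overline{\mathcal T}_\theta(\nu|\mu)$, I would fix $f$ convex, Lipschitz and bounded from below, and a plan $\pi\in C(\mu,\nu)$ with kernel $p$. Jensen's inequality applied to $f$ gives $\int f\,d\nu=\iint f(y)\,dp_x(y)\,d\mu(x)\ge\int f(\bar p_x)\,d\mu(x)$ where $\bar p_x:=\int y\,dp_x(y)$, while the definition of $Q_\theta f$ yields $Q_\theta f(x)\le f(\bar p_x)+\theta(\bar p_x-x)$. Subtracting these, integrating against $\mu$, and then optimizing first over $\pi$ and then over $f$ gives the bound.

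For the reverse inequality my plan is to apply Fenchel--Moreau duality to the functional $G(\nu):=\overline{\mathcal T}_\theta(\nu|\mu)$ viewed on $\mathcal{P}_1(\R^d)$. First, $G$ is convex: for $\nu=\lambda\nu_1+(1-\lambda)\nu_2$ and nearly-optimal $\pi_i\in C(\mu,\nu_i)$ with kernels $p^i$, the combination $\pi:=\lambda\pi_1+(1-\lambda)\pi_2$ belongs to $C(\mu,\nu)$ with kernel $\lambda p^1+(1-\lambda)p^2$ and barycenter $\lambda\bar p^1+(1-\lambda)\bar p^2$, so the convexity of $\theta$ transfers directly. Second, $G$ is lower semicontinuous for the $W_1$-topology, via a tightness argument on optimal plans combined with the weak lower semicontinuity of the barycentric functional. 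I expect this last point to be the main technical obstacle and the most delicate step of the proof.

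Next I would compute $G^*(g)=\sup_\nu\{\int g\,d\nu-G(\nu)\}$. Swapping the $\sup$ over $\nu$ with the $\inf$ in $\overline{\mathcal T}_\theta$ and decomposing pointwise in $x$ gives $G^*(g)=\int\sup_{p\in\mathcal{P}(\R^d)}\{\int g\,dp-\theta(\bar p-x)\}\,d\mu(x)$. The key observation is that the inner supremum depends on $g$ only through its concave envelope $\bar g(b):=\sup\{\int g\,dp:\bar p=b\}$: hence $G^*(g)=G^*(\bar g)$, and since $\bar g\ge g$ implies $\int\bar g\,d\nu\ge\int g\,d\nu$, the supremum defining $G^{**}$ may be restricted to concave test functions. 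For concave $g=-f$, Jensen shows the inner sup is attained on Dirac masses, so $G^*(-f)=-\int Q_\theta f\,d\mu$. Fenchel--Moreau (justified by the convexity and lower semicontinuity of $G$) then yields $G=G^{**}=\sup_{f\text{ convex}}\{\int Q_\theta f\,d\mu-\int f\,d\nu\}$. A final approximation (inf-convolution with $n|\cdot|$ to enforce Lipschitz continuity and truncation from below) restricts the test class to convex, Lipschitz and bounded-from-below functions, matching the statement. The remaining care is to choose a duality pairing between $\mathcal{P}_1(\R^d)$ endowed with the $W_1$-topology and a class of continuous test functions of linear growth in which Fenchel--Moreau is genuinely applicable.
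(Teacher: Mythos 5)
A preliminary remark: the paper does not prove Theorem \ref{them:duality} at all --- it is quoted from \cite[Theorem 2.11]{GRST17} --- so there is no in-paper argument to compare yours against. That said, your architecture (easy inequality by Jensen applied to the convex test function plus the definition of $Q_\theta$; hard inequality by biconjugation of $G(\nu)=\overline{\mathcal T}_\theta(\nu|\mu)$) is essentially the strategy of that reference, and the individual reductions you describe are sound: convexity of $G$ via mixtures of kernels, the identity $G^*(g)=G^*(\hat g)$ with $\hat g$ the concave envelope (discarding the $g$ for which $\hat g\equiv+\infty$, where $G^*(g)=+\infty$ anyway), the computation $G^*(-f)=-\int Q_\theta f\,d\mu$ for convex $f$ via Jensen and Dirac masses, and the final inf-convolution/truncation to reach convex, Lipschitz, bounded-from-below test functions.

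The genuine gap is the one you flag and then do not fill: the lower semicontinuity of $G$ and the precise functional-analytic setting in which Fenchel--Moreau applies. This is not a routine verification; it is exactly where the convexity of $p\mapsto\theta\bigl(\int y\,dp(y)-x\bigr)$ must enter, and since the duality fails for cost functions that are not convex in the kernel variable, no proof can be complete without this step. Concretely, to get lsc of $G$ along $W_1$-convergent $\nu_n\to\nu$ you would take near-optimal $\pi_n\in C(\mu,\nu_n)$, extract a weak limit $\pi$, and then need $\liminf_n\int\theta(\bar p^n_x-x)\,d\mu(x)\geq\int\theta(\bar p_x-x)\,d\mu(x)$; weak convergence of $\pi_n$ gives no pointwise control on the disintegrations $p^n_x$, and the weak lower semicontinuity of $\pi\mapsto\int\theta(\bar p_x-x)\,d\mu(x)$ is precisely the nontrivial result proved in \cite[Proposition 9.5]{GRST17} (see also \cite{ABC18}), relying on joint lower semicontinuity and convexity in $p$. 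In addition, Fenchel--Moreau must be applied on a locally convex vector space: you need to extend $G$ by $+\infty$ off $\mathcal{P}_1(\R^d)$ inside the signed measures with finite first moment, check lsc for the weak topology induced by continuous functions of linear growth (so that the dual space is indeed your test class), and justify the pointwise decomposition $G^*(g)=\int\sup_{p}\{\int g\,dp-\theta(\bar p-x)\}\,d\mu(x)$ by a measurable selection argument (the inequality $\leq$ is immediate, the inequality $\geq$ is not). Until these points are supplied, the hard inequality is not established.
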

Note that $Q_\theta(f)$ is the $c$-transform of $f$ related to the cost $c(x,y)=\theta(y-x)$, $x,y\in \R^d$, as defined in optimal transport theory (see for instance \cite[Definition 5.2]{Vil09}). The difference with respect to the usual duality formula for the global transport cost associated to $\theta$ (see \cite[Theorem 5.10]{Vil09}) is that we optimize over a class of convex functions. Several applications of this duality formula were already investigated in \cite{GRST17}, mainly in connection with transport-entropy inequalities and deviation inequalities for convex functions. In particular, let us mention that Strassen Theorem can be derived from this duality theorem, see \cite[Section 3]{GRST17}. Here we shall use this duality result to describe optimal transport plans for $\overline{\mathcal {T}}_2.$  

Before stating our main result, we need some preparation. First recall that the transport distance $W_2$ is defined for all probability measures $\mu,\nu \in \mathcal{P}_2(\R^d)$ by
\[
W_2^2(\mu,\nu) = \inf_{\pi \in C(\mu,\nu)} \iint |y-x|^2\,d\pi(x,y).
\]
Given a probability $\nu \in \mathcal{P}_1(\R^d)$, we denote by
\[
B_\nu = \{ \eta \in \mathcal{P}_1(\R^d) : \eta \leqc \nu\}
\]
the set of all probability measures which are dominated by $\nu$ in the convex order. This set $B_\nu$ is easily seen to be convex in the usual sense and the proof of Proposition \ref{prop:projection} will show that (generalized) $W_2$-geodesics with endpoints in $B_\nu$ are contained in $B_\nu$. 
\begin{prop}\label{prop:projection}
Let $\mu,\nu \in \mathcal{P}_2(\R^d)$. There exists a unique probability measure $\bar{\mu} \in B_\nu$ such that
\[
W_2(\bar{\mu},\mu) = \inf_{\eta \in B_\nu} W_2(\eta,\mu)= \overline{\mathcal{T}_2}(\nu|\mu).
\]
We call $\bar{\mu}$ the projection of $\mu$ on $B_\nu.$
\end{prop}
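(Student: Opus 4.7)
The plan is to prove the two claimed equalities together by a double inequality, then obtain existence via $W_2$-compactness of $B_\nu$, and finally uniqueness through a generalized-geodesic computation which, as a by-product, gives the announced stability of $B_\nu$ along $W_2$-geodesics.

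\textbf{Identifying the two infima.} For ``$\inf_{\eta \in B_\nu} W_2^2(\eta,\mu) \leq \overline{\mathcal{T}}_2(\nu|\mu)$'', given any $\pi \in C(\mu,\nu)$ disintegrated as $d\pi(x,y) = d\mu(x)\,dp_x(y)$, I set $T(x) := \int y\,dp_x(y)$ and $\eta := T_\# \mu$. Jensen's inequality ($f(T(x)) \leq \int f\,dp_x$ for $f$ convex) gives $\eta \in B_\nu$, and the transport $(\Id, T)_\# \mu$ yields $W_2^2(\eta,\mu) \leq \int |T(x) - x|^2\,d\mu(x)$; the infimum over $\pi$ gives the inequality. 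For the reverse, fix $\eta \in B_\nu$. Strassen's theorem (this is precisely where $\eta \leqc \nu$ is used) provides a martingale coupling $\pi_M \in C(\eta,\nu)$, which I glue to the $W_2$-optimal coupling of $(\mu,\eta)$ over their common $\eta$-marginal to produce a triple $(X,X',Y)$ with laws $\mu,\eta,\nu$, with $(X,X')$ $W_2$-optimal and $\E[Y \mid X'] = X'$. The pair $(X,Y) \in C(\mu,\nu)$, and the tower property with Jensen give
\[
\E\bigl[|\E[Y\mid X] - X|^2\bigr] = \E\bigl[|\E[X' - X \mid X]|^2\bigr] \leq \E\bigl[|X' - X|^2\bigr] = W_2^2(\eta,\mu),
\]
so $\overline{\mathcal{T}}_2(\nu|\mu) \leq W_2^2(\eta,\mu)$.

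\textbf{Existence.} The set $B_\nu$ is $W_2$-closed, by lower semicontinuity of integration against continuous convex functions (after absorbing the affine lower bound using the uniform first-moment estimate $\int |x|\,d\eta \leq \int |x|\,d\nu$ available on $B_\nu$). To get compactness I apply de~la~Vall\'ee~Poussin to $\nu \in \mathcal{P}_2(\R^d)$ to obtain a nondecreasing convex $\phi:[0,\infty) \to [0,\infty)$ with $\phi(s)/s \to \infty$ and $\int \phi(|x|^2)\,d\nu < \infty$. Setting $\psi(t) := \phi(t^2)$, a direct check shows $\psi$ is convex and nondecreasing on $[0,\infty)$ with $\psi(t)/t^2 \to \infty$; since $x \mapsto \psi(|x|)$ is convex, $\int \psi(|x|)\,d\eta \leq \int \psi(|x|)\,d\nu < \infty$ for every $\eta \in B_\nu$. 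This uniform super-quadratic bound yields uniform $|x|^2$-integrability, hence $W_2$-compactness of $B_\nu$; as $\eta \mapsto W_2^2(\eta,\mu)$ is $W_2$-continuous, the infimum is attained.

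\textbf{Uniqueness and geodesic stability.} Suppose $\bar\mu_0, \bar\mu_1 \in B_\nu$ both realize the minimum $m$. Let $\pi^i \in C(\mu, \bar\mu_i)$ be $W_2$-optimal and glue them over $\mu$ into $\gamma$ on $\R^d \times \R^d \times \R^d$. For $t \in [0,1]$ set $\mu_t := (Z_t)_\# \gamma$ with $Z_t(x, y_0, y_1) := (1-t) y_0 + t y_1$. For any convex $f$,
\[
\int f\,d\mu_t \leq (1-t) \int f\,d\bar\mu_0 + t \int f\,d\bar\mu_1 \leq \int f\,d\nu,
\]
so $\mu_t \in B_\nu$ --- which is exactly the announced stability of $B_\nu$ along generalized $W_2$-geodesics with base $\mu$. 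Applying the pointwise identity $|(1-t)a + t b|^2 = (1-t)|a|^2 + t|b|^2 - t(1-t)|a - b|^2$ to $a = y_0 - x$, $b = y_1 - x$, together with the coupling bound $W_2^2(\mu, \mu_t) \leq \int |Z_t(x, y_0, y_1) - x|^2\,d\gamma$, yields
\[
W_2^2(\mu, \mu_t) \leq m - t(1-t) \int |y_0 - y_1|^2\,d\gamma,
\]
while $\mu_t \in B_\nu$ forces $W_2^2(\mu, \mu_t) \geq m$. Hence $\int |y_0 - y_1|^2\,d\gamma = 0$ and $\bar\mu_0 = \bar\mu_1$.

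The main obstacle is the existence step: the uniform $|x|^2$-integrability on $B_\nu$ that powers $W_2$-compactness does not follow directly from $\eta \leqc \nu$, since the natural truncation $|x|^2 \1_{|x| > R}$ is not convex and cannot be tested against the convex order; the de~la~Vall\'ee~Poussin detour combined with the simple observation that $\phi(t^2)$ is convex whenever $\phi$ is convex nondecreasing is what unlocks the argument.
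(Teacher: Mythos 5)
Your proof is correct and follows the same overall architecture as the paper's: the two-sided inequality identifying $\inf_{\eta\in B_\nu}W_2^2(\eta,\mu)$ with $\overline{\mathcal{T}}_2(\nu|\mu)$ via Jensen and Strassen, $W_2$-compactness of $B_\nu$ for existence, and strict convexity along generalized geodesics with base $\mu$ for uniqueness. The one genuinely different step is the uniform square-integrability of $B_\nu$: you route it through de la Vall\'ee Poussin and the observation that $x\mapsto\phi(|x|^2)$ is convex, whereas the paper simply dominates the truncation by an explicit convex function, namely $|x|^2\mathbf{1}_{|x|>R}\leq\max(0;2|x|-R)^2$, and then tests the right-hand side against the convex order and lets $R\to\infty$ by monotone convergence. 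Both arguments work, but your closing remark --- that the truncation ``cannot be tested against the convex order'' and that the de la Vall\'ee Poussin detour is what ``unlocks'' the argument --- overstates the obstacle: the truncation need not itself be convex, it only needs a convex majorant vanishing in the limit, and the paper's choice shows this is elementary. Your de la Vall\'ee Poussin argument buys nothing extra here beyond what the explicit majorant gives, though it is a perfectly valid and more systematic alternative.
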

\begin{rem}
The identity 
\[
\overline{\mathcal{T}}_2(\nu|\mu) = \inf_{\eta \in B_\nu} W_2^2(\eta,\mu)
\]
of Proposition \ref{prop:projection} was already observed in \cite[Proposition 3.1]{GRSST15}  in dimension $1$ when $\mu$ \emph{has no atom} and then generalized to higher dimensions in \cite[Proposition 4.1]{Samsurv} when $\mu$ is \emph{absolutely continuous} with respect to Lebesgue. After a first version of this work has been released, we learned that Proposition \ref{prop:projection} has been independently obtained by Alfonsi, Corbetta and Jourdain in \cite{ACJ} in connections with the study of algorithms approximating the martingale transport problem.
\end{rem}
We recall that if $g : \R^d \to \R\cup\{+\infty\}$ is some convex function, the Legendre transform $g\mapsto g^*$ is defined by
\[
g^*(y) = \sup_{x\in \R^d}\{x\cdot y - g(x)\},\qquad y\in \R^d.
\]
We will keep on using notation ``$*$'' for the Legendre transform all along the paper. With these notions in hand, we can now state the main result of this paper which describes the set of all optimal couplings for $\overline{\mathcal{T}}_2$.
\begin{thm}\label{main-result}
Let $\mu, \nu \in \mathcal{P}_2(\R^d)$.
\begin{itemize}
\item[(a)] There exists some lower semi-continuous convex function $f^\circ:\R^d\to \R\cup\{+\infty\}$, which is integrable with respect to $\nu$ and such that
\begin{equation}\label{eq:attainment}
\overline{\mathcal{T}_2}(\nu|\mu)=\int Q_2f^\circ\,d\mu - \int f^\circ\,d\nu.
\end{equation}
In this equation $Q_2$ corresponds to $Q_\theta$, defined in Theorem \ref{them:duality}, where $\theta$ is the quadratic cost. It is given for any function $g:\R^d\to \R$ by
\[
Q_2g(x) = \inf_{y\in \R^d}\{ g(y) + |y-x|^2\},\qquad  x \in \R^d.
\]
\item[(b)] Let $h$ and $\varphi$ be the convex functions defined by
\[
h(x) = \frac{f^\circ(x)+|x|^2}{2}, \quad x\in \R^d \qquad\text{and}\qquad \varphi(y) = h^*(y),\quad y\in \R^d.
\]
The function $\varphi$ is $\mathcal{C}^1$-smooth on $\R^d$ and the map $\nabla \varphi$ is $1$-Lipschitz on $\R^d$. The projection $\bar{\mu}$ of $\mu$ on $B_\nu$ is such that 
\[
\bar{\mu} = \nabla \varphi_\# \mu.
\]

\item[(c)] The set of optimal transport plans for  $\overline{\mathcal{T}}_2(\nu|\mu)$ is non-empty. Moreover if $(X,Y)$ is a coupling of $\mu,\nu$ such that $\overline{\mathcal{T}}_2(\nu|\mu) = \E[|\E[Y|X]-X|^2]$, then $\E[Y|X]$ has law $\bar{\mu}$, $\E[Y|X] = \nabla \varphi (X)$ almost surely and  $(\E[Y|X],Y)$ is a martingale (this last point being of course always true).
\end{itemize}
\end{thm}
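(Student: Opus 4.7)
The strategy is to leverage the duality of Theorem~\ref{them:duality} together with the identity $\overline{\mathcal{T}}_2(\nu|\mu) = W_2^2(\bar\mu,\mu)$ of Proposition~\ref{prop:projection}. Setting $h := (f+|\cdot|^2)/2$ and $\varphi := h^*$, a direct computation gives $Q_2 f(x) = |x|^2 - 2\varphi(x)$, so the dual functional $J(f) := \int Q_2 f\,d\mu - \int f\,d\nu$ rewrites as
\[
J(f) = \int|x|^2\,d\mu + \int|y|^2\,d\nu - 2\left[\int \varphi\,d\mu + \int \varphi^*\,d\nu\right].
\]
Maximizing $J$ over convex $f$ then amounts to minimizing $K(\varphi):=\int\varphi\,d\mu+\int\varphi^*\,d\nu$ over convex $\varphi$ with $1$-Lipschitz gradient (this regularity matches $h$ being $1$-strongly convex, i.e.\ $f = 2h - |\cdot|^2$ being convex).

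For part (a), I take a minimizing sequence $(\varphi_n)$ normalized by $\varphi_n(0) = 0$ (justified by the invariance $f\mapsto f+c$). From the sandwich $\nabla\varphi_n(0)\cdot x \leq \varphi_n(x) \leq \nabla\varphi_n(0)\cdot x + |x|^2/2$ (convexity and $1$-Lipschitzness of $\nabla\varphi_n$), one obtains $\varphi_n^*(y) \geq \tfrac12|y - \nabla\varphi_n(0)|^2$; substituting into $K(\varphi_n)$ produces a coercive quadratic in $\nabla\varphi_n(0)$, so the sequence $(\nabla\varphi_n(0))$ is bounded. Consequently $(\varphi_n)$ is locally equi-bounded and equi-Lipschitz, and Arzelà--Ascoli delivers a convex limit $\varphi^\circ$ with $1$-Lipschitz gradient. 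Dominated convergence (using $\mu,\nu \in \mathcal{P}_2$ and the quadratic growth of $\varphi_n$) together with the lower semicontinuity of $\varphi\mapsto \int \varphi^*\,d\nu$ under pointwise convergence of convex functions yields $K(\varphi^\circ)\leq\liminf K(\varphi_n)$. Setting $h^\circ := (\varphi^\circ)^*$ and $f^\circ := 2h^\circ - |\cdot|^2$ furnishes the l.s.c.\ convex dual optimizer.

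For part (b), the $1$-strong convexity of $h$ makes $\varphi$ of class $\mathcal{C}^1$ with $1$-Lipschitz gradient, and the infimum in $Q_2 f^\circ(x)$ is uniquely attained at $\nabla\varphi(x)$, so $Q_2 f^\circ(x) - f^\circ(y) \leq |x-y|^2$ with equality iff $y = \nabla\varphi(x)$. Using $(Q_2 f^\circ, -f^\circ)$ as a Kantorovich dual pair for $W_2^2(\mu,\bar\mu)$,
\[
W_2^2(\mu,\bar\mu) \geq \int Q_2 f^\circ\,d\mu - \int f^\circ\,d\bar\mu \geq \int Q_2 f^\circ\,d\mu - \int f^\circ\,d\nu = \overline{\mathcal{T}}_2(\nu|\mu) = W_2^2(\mu,\bar\mu),
\]
where the second inequality uses $\bar\mu \leqc \nu$ and convexity of $f^\circ$. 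All inequalities are forced to be equalities, so $(Q_2 f^\circ, -f^\circ)$ is Kantorovich-optimal for $(\mu,\bar\mu)$; by complementary slackness, every $W_2^2$-optimal coupling of $(\mu,\bar\mu)$ is concentrated on the equality set $\{(x,y) : y = \nabla\varphi(x)\}$. This identifies the optimal coupling as $(\mathrm{Id},\nabla\varphi)_\# \mu$, and hence $\bar\mu = \nabla\varphi_\#\mu$.

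For part (c), existence is obtained by gluing: Strassen's theorem provides a martingale kernel $q$ from $\bar\mu$ to $\nu$ (since $\bar\mu \leqc \nu$), and $d\pi(x,y):= d\mu(x)\,dq_{\nabla\varphi(x)}(y)$ defines a coupling of $\mu,\nu$ satisfying $\E[Y|X] = \nabla\varphi(X)$ and with barycentric cost $\int|\nabla\varphi(x)-x|^2\,d\mu = W_2^2(\mu,\bar\mu) = \overline{\mathcal{T}}_2(\nu|\mu)$. Conversely, for any optimal coupling $(X,Y)$,
\[
\overline{\mathcal{T}}_2(\nu|\mu) = \E\bigl[|\E[Y|X]-X|^2\bigr] \geq \E\bigl[Q_2 f^\circ(X) - f^\circ(\E[Y|X])\bigr] \geq \E[Q_2 f^\circ(X)] - \E[f^\circ(Y)] = \overline{\mathcal{T}}_2(\nu|\mu),
\]
where the first inequality uses $Q_2 f^\circ(X) \leq f^\circ(\E[Y|X]) + |\E[Y|X]-X|^2$ and the second is conditional Jensen for convex $f^\circ$ along the martingale $(\E[Y|X],Y)$. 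Equality in the first step forces $\E[Y|X]$ to minimize $f^\circ(y)+|y-X|^2$, hence $\E[Y|X]=\nabla\varphi(X)$ a.s., and therefore $\E[Y|X]\sim\bar\mu$ by part (b); the martingale property of $(\E[Y|X], Y)$ is immediate from the tower property applied to $\sigma(\E[Y|X]) \subset \sigma(X)$. The main technical hurdle is part (a), where the substitution to $\varphi$ with $1$-Lipschitz gradient is precisely what provides the compactness needed to extract the dual optimizer in a class without \emph{a priori} bounds on Lipschitz constants.
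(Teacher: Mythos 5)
Your proof is correct, but it departs from the paper's argument in a substantive way for Items (b) and (c). For (a), your route is essentially the paper's in a different parametrization: the paper works with the convex functions $g_n=Q_2f_n=|x|^2-2\varphi_n$, derives local boundedness (an upper bound $g_n\leq|x|^2$ and a lower bound at the barycenter of $\mu$ via Jensen) and invokes Rockafellar's compactness theorem for convex functions, whereas you bound $\nabla\varphi_n(0)$ by coercivity of $K$ and apply Arzel\`a--Ascoli to the equi-Lipschitz family $(\varphi_n)$; both mechanisms are sound. Two small points you gloss over: the passage from the dual class of Theorem \ref{them:duality} (convex, Lipschitz, bounded below) to the larger class of l.s.c.\ convex $f$ corresponding to \emph{all} $\varphi$ with $1$-Lipschitz gradient requires the weak-duality inequality $\int Q_2f\,d\mu-\int f\,d\nu\leq\overline{\mathcal{T}}_2(\nu|\mu)$ for that larger class (a Jensen argument plus the integrability checks the paper performs at the start of its proof of (a)); and the fact that the Arzel\`a--Ascoli limit $\varphi^\circ$ again has a $1$-Lipschitz gradient should be justified via the pointwise stability of the convexity of $x\mapsto|x|^2/2-\varphi_n(x)$ together with Lemma \ref{lem:contraction}. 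For (b) and (c) your argument is genuinely different: the paper proves $\nabla\varphi_\#\mu\leqc\nu$ by a first-variation argument around $f^\circ$ (Lemma \ref{Gangbo}, with perturbations $f^\circ+tu$ and $u_k=-\min(f^\circ,k)$) and then identifies $\bar\mu$, and it proves the a.s.\ uniqueness of $\E[Y|X]$ in (c) by a midpoint/strict-convexity argument between two optimal kernels. You instead exploit Proposition \ref{prop:projection} (proved independently in Section \ref{sec:proofs}) and run complementary slackness against the dual optimizer twice: once as a Kantorovich pair $(Q_2f^\circ,-f^\circ)$ for $W_2^2(\mu,\bar\mu)$ to force optimal couplings onto the graph of $\nabla\varphi$, and once in primal--dual form for $\overline{\mathcal{T}}_2$ to force $\E[Y|X]=\nabla\varphi(X)$. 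This avoids the delicate limit interchanges of Lemma \ref{Gangbo} entirely and is closer in spirit to the paper's own $c$-monotonicity argument of Theorem \ref{thm:monoto}; what it costs is that it leans on Proposition \ref{prop:projection} as an external input, whereas the paper's proof of (b) re-derives the identity $\overline{\mathcal{T}}_2(\nu|\mu)=\inf_{\eta\leqc\nu}W_2^2(\eta,\mu)$ along the way.
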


\begin{rem}\label{rem:deux_versions}\ 
\begin{itemize}
\item[-] The fact that $\varphi$ is convex and $\bar{\mu}=\nabla \varphi_\#\mu$ implies that $x\mapsto \nabla \varphi(x)$ provides the optimal transport plan $\pi=(\Id\times \nabla\varphi)_\#\mu \in C(\mu,\bar{\mu})$ for the quadratic cost. In other words, if $X$ has law $\mu$ then
\begin{equation}\label{eq:gradphiopti}
W_2^2(\bar{\mu},\mu) = \E[|\nabla \varphi (X) - X|^2].
\end{equation}
As in the classical optimal transport theory the conjugate potential $\psi= \varphi^* (= h)$ is such that $x\in \partial \psi(y)$, $\pi$-almost surely. Here $\partial \psi$ is the subgradient of $\psi$.
\item[-] We underline the fact that there is no assumption on the measure $\mu$ (as for instance the condition that it does not give mass to small sets, which is classical for Brenier transport, see \cite{Gigli11} for a minimal condition). As the potential function $\varphi$ is $\mathcal{C}^1$-smooth its gradient is well defined everywhere. Hence, the optimal transport map $x\mapsto \nabla\varphi(x)$ is pointwise well-defined and there is no ambiguity on the range of small sets.
\item[-] We stress that the optimal transport map from $\mu$ to $\bar{\mu}$ is continuous and even $1$-Lipschitz continuous, which is not automatic for quadratic optimal transport between arbitrary measures of $\mathcal{P}_2(\R^d)$. 
\item[-] In a previous version of this paper, Theorem \ref{main-result} appeared with the additional assumption that $\mu$ and $\nu$ are compactly supported. Soon after this first version has been released, a paper by Backhoff-Veraguas, Beiglb\"ock and Pammer \cite{BBP} proposed an improved version of Items (b) and (c) of our main result \emph{removing this compactness assumption}. Their approach is based on a clever combination of generalized cyclical monotonicity arguments and on the first compact version of our Theorem \ref{main-result}. In the present version of our paper, we improve our preceding proof of Item (a) of Theorem \ref{main-result} in order to remove also the compactness of supports assumption (the proof of Items (b) and (c) being essentially unchanged). The main difference between the previous proof and the present one is that the existence of the dual optimizer is obtained using some version of Ascoli Theorem adapted to convex functions instead of the usual version. 
To keep visible the incremental progresses around Theorem \ref{main-result}, we have treated separately the compact and the general cases in the proof Section \ref{sec:proofs}.
\end{itemize}
\end{rem}
Before presenting the organization of the paper, let us mention that the methods and results developed in this work are likely to extend to more general barycentric or even some classes of weak optimal transport problems, see \S \ref{sec:weak} for a presentation of the weak transport costs. We focus on the particular case of $\overline{\mathcal{T}}_2$ to avoid unnecessary generality and make the reading of this paper more fluid.

\subsection{Other results and organization of the paper} Let us now describe the content of the paper.

Section \ref{sec:contraction} adresses the question of characterizing equality cases between $W_2^2$ and $\overline{\mathcal{T}}_2$. The second main result of the paper, Theorem \ref{thm:equivalence}, states that $W_2^2(\nu,\mu) = \overline{\mathcal{T}}_2(\nu |\mu)$ \emph{if and only if} there exists some continuously differentiable convex function $\varphi$ such that $\nabla \varphi$ is $1$-Lipschitz and $\nu = \nabla \varphi _\# \mu$. In other words, this result shows that the Brenier map from $\mu$ to $\nu$ is a contraction, i.e a $1$-Lipschitz map, if and only if $\bar{\mu} = \nu.$ This observation is then compared to a classical theorem by Caffarelli  \cite{Caffarelli00} that states that probability measures with a log-concave density with respect to the standard Gaussian measure are contractions of it.

In Section \ref{sec:monotonicity}, we generalize the notion of $c-$monotonicity to the case of the transport cost $\overline{\mathcal{T}}_2$. We illustrate the power of this notion, by giving a new proof of Strassen Theorem for submartingales in dimension $1$.

In Section \ref{sec:Simplex}, we study a concrete example and describe the function $\nabla \varphi$ and the projected measure $\bar{\mu}$ when $\mu$ is some arbitrary probability measure on $\R^d$ (with finite second moment) and $\nu$ is a discrete probability measure concentrated on the vertices of a simplex in $\R^d.$

Section \ref{sec:examples} recalls some other examples and results appearing in the recent literature.

Finally, Section \ref{sec:proofs} contains the postponed proofs of Proposition \ref{prop:projection} and Theorem \ref{main-result} (for both the compact and general cases) and of some auxiliary lemmas.

%

\subsection{More about weak optimal transport costs}\label{sec:weak}
As mentioned at the end of \S \ref{sub:BaS}, the barycentric transport costs \eqref{eq:barcost} enter a more general family of transport costs, that we will now recall. 

Let $(\X,d)$ be a Polish space and $\mathcal{P}(\X)$ be the set of Borel probability measures on $\X.$
Given a cost function $c : \X \times \mathcal{P}(\X) \to \R^+$ and probability measures $\mu,\nu$ on $\X$ the optimal weak transport cost $\mathcal{T}_c(\nu|\mu)$ from $\mu$ to $\nu$ is defined by
\[
\mathcal{T}_c(\nu|\mu) = \inf_{\pi \in C(\mu,\nu)} \int c(x,p_x)\,d\mu(x),
\]
where $\pi$ and $p$ are related together by \eqref{dis}. Note that this definition makes sense under mild regularity assumptions on $c$, that we will not detail in this exposition.

On the one hand, when $c(x,p) = \int \rho(x,y)\,dp(y)$ for some measurable non-negative function $\rho$ on $\X^2$, one recovers the usual Monge-Kantorovich optimal transport cost 
\[
\mathcal{T}_\rho(\mu,\nu) = \inf_{\pi \in C(\mu,\nu)} \int \rho(x,y)\,d\pi(x,y).
\]
On the other hand, the optimal barycentric transport costs presented in \eqref{eq:barcost} corresponds to cost functions of the form $c(x,p)=\theta (\int y\,dp(y)-x)$. 

The first appearance of this form of transport cost goes back to the works of Marton \cite{Mar96a,Mar96b} and Talagrand \cite{Tal95,Tal96b} in connections with the concentration of measure phenomenon via the so-called transport-entropy inequalities (see \cite{Led01} for an introduction to concentration of measure phenomenon and \cite{GL10} for a survey on transport-entropy inequalities). For instance, Marton considers in \cite{Mar96b} the following simple cost function: 
\[
c_{\mathrm{Marton}}(x,p) = \left(\int \mathbf{1}_{x\neq y}\,dp(y)\right)^2,\qquad x \in \X,
\] 
and recovers a fundamental concentration of measure result by Talagrand \cite{Tal95} for product probability measures. After Marton, some other universal concentration of measures results by Talagrand were recovered or improved by Dembo \cite{Dem97} and Samson \cite{Sam00, Sam03, Sam07} using variants of Marton's cost $c_{\mathrm{Marton}}$. Motivated by questions related to concentration and curvature properties of discrete measures, the paper \cite{GRST17} introduces the general definition given above and studies some of the properties of this new class of transport costs. In particular Kantorovich type duality formulas are obtained \cite[Theorem 9.6]{GRST17} under the assumption that $c$ is convex with respect to the $p$ variable (and some additional mild regularity conditions). We refer to \cite{GRST14,GRSST15,Sam17, Shu18, FS18} for works directly connected to \cite{GRST17} and to \cite{Samsurv} for an up-to-date survey of applications of weak transport costs to concentration of measure.

Besides their many applications in the field of functional inequalities and concentration of measure, it turns out that weak transport costs are also interesting in themselves as a natural generalization of the transportation problem.  Indeed, the main interest of this definition is that it enables the introduction of additional constraints to the transfers of mass. For instance, a cost function of the form
\[
c(x,p) = \left\{\begin{array}{ll} \int \rho(x,y)\,dp(y)  & \text{if } \int y\,dp(y)=x   \\ +\infty & \text{otherwise}   \end{array}\right.
\]
where $\rho: \R^d\times \R^d$ is some non-negative measurable function gives back the notion of optimal transport with martingale constraint: if $\mu \leqc \nu$ (say compactly supported to avoid definition problem), then 
\[
\mathcal{T}_c(\nu|\mu) = \inf \{\E[\rho(X,Y)] : X\sim \mu, Y\sim \nu, (X,Y) \text{ is a martingale} \}.
\]
This optimal transport problem with martingale contraint has been thoroughly studied in \cite{BJ} for the dimension 1.  There, the martingale transport problem is studied for particular families of costs satisfying the cross derivative condition $\partial_x\partial_y^2 \rho<0$ giving rise to the left-curtain coupling (on this coupling, see also \cite{HT,Ju_ihp,HoNo}) and for the cost functions $\rho:(x,y)\mapsto \pm|y-x|$, generalizing results by Hobson and his coauthors, Neuberger and Klimmek \cite{HN,HK}, respectively. The supermartingale problem  in dimension 1 with cross-derivative condition is studied in \cite{NS}. In higher dimension $\rho:(x,y)\mapsto \pm\|y-x\|$ is studied in \cite{GKL,Lim2014}, more general costs are considered in \cite{March2018a}. We refer to \cite{ABC18} for existence results in the dual optimal transport problem with martingale contraints making use of the formulation in terms of weak costs as above.

As observed by Conforti \cite{conforti17} and Conforti and Ripani \cite{CR18} the class of weak transport costs also entails the notion of entropic costs related to the Schrödinger problem (see the nice survey by C. Léonard \cite{Leo14}). 
We also refer to the recent articles by Alibert, Bouchitt\'e and Champion \cite{ABC18} (see also Section \ref{sec:examples}) and Bowles and Ghoussoub \cite{BG18} for further developments and examples.

\section{Link with Caffarelli's contraction theorem}\label{sec:contraction}

In this section we admit the main theorem of this paper, Theorem \ref{main-result}, and derive a link with the celebrated Caffarelli's contraction theorem of \cite{Caffarelli00}.
Precisely we investigate the question of determining in which case 
\[
W_2^2(\nu,\mu) = \overline{\mathcal{T}}_2(\nu|\mu)
\]
and we show how this question is related (when $\mu$ is the standard Gaussian) to Caffarelli's result. The following result is the second main contribution of the paper:
\begin{thm}\label{thm:equivalence}
Let $\mu,\nu \in \mathcal{P}_2(\R^d)$ with $\R^d$ equipped with the standard Euclidean norm. The following statements are equivalent:
\begin{itemize}
\item[(a)] There exists a convex function $\varphi : \R^d\to \R$ of class $\mathcal{C}^1$ such that $\nabla \varphi$ is $1$-Lipschitz such that $\nu = \nabla \varphi_\# \mu.$
\item[(b)] The projection $\bar{\mu}$ of $\mu$ on the set $B_\nu = \{ \eta \in \mathcal{P}_1(\R^d)  : \eta \preceq_c \nu\}$ is $\nu.$
\item[(c)] It holds $W_2^2(\nu,\mu) = \overline{\mathcal{T}}_2(\nu|\mu)$.
\end{itemize}
\end{thm}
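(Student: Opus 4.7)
The equivalences are most naturally organized as (c)$\Leftrightarrow$(b), then (b)$\Rightarrow$(a), and finally (a)$\Rightarrow$(c), which is the substantive step. The first implication is essentially a tautology: since $\nu\preceq_c\nu$ one has $\nu\in B_\nu$, so Proposition~\ref{prop:projection} gives $\overline{\mathcal{T}}_2(\nu|\mu) = W_2^2(\bar\mu,\mu) \leq W_2^2(\nu,\mu)$ with equality iff $\nu$ realizes the infimum, and by uniqueness of the $W_2$-projection this is equivalent to $\bar\mu=\nu$. The implication (b)$\Rightarrow$(a) is a direct reading of Theorem~\ref{main-result}(b), which unconditionally produces a $\mathcal{C}^1$-smooth convex $\varphi$ with $\nabla\varphi$ $1$-Lipschitz such that $\bar\mu=\nabla\varphi_\#\mu$; under (b) this measure coincides with $\nu$.

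For (a)$\Rightarrow$(c) the plan is to exhibit an explicit near-optimizer in the dual formulation of Theorem~\ref{them:duality}. The natural candidate, suggested by the parametrization $h = (f^\circ + |\cdot|^2)/2$, $\varphi = h^*$ used in Theorem~\ref{main-result}(b), is
\begin{equation*}
f := 2\varphi^{*} - |\cdot|^2.
\end{equation*}
Since the $1$-Lipschitz character of $\nabla\varphi$ is equivalent to $\varphi^{*}$ being $1$-strongly convex, $f$ is convex. A short computation from the definition of $Q_2$ together with $\varphi^{**}=\varphi$ (using that $\varphi$ is continuous convex) yields $Q_2 f(x) = |x|^2 - 2\varphi(x)$. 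Using $\nu=\nabla\varphi_\#\mu$ and the Fenchel equality $\varphi^{*}(\nabla\varphi(x))=\langle x,\nabla\varphi(x)\rangle - \varphi(x)$ (valid $\mu$-a.e.\ since $\varphi$ is everywhere differentiable), the cross-terms telescope to
\begin{equation*}
\int Q_2 f \, d\mu - \int f\, d\nu \;=\; \int |\nabla\varphi(x)-x|^2\, d\mu(x),
\end{equation*}
which is itself $\geq W_2^2(\nu,\mu)$ because $(\mathrm{Id},\nabla\varphi)_\#\mu \in C(\mu,\nu)$ is an admissible quadratic coupling. Combined with the always-valid reverse bound $\overline{\mathcal{T}}_2(\nu|\mu)\leq W_2^2(\nu,\mu)$, obtained by applying Jensen to any $W_2$-optimal coupling, equality holds throughout the chain, which is (c).

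The main obstacle I foresee is an integrability check. Theorem~\ref{them:duality} is stated for $f$ convex, Lipschitz and bounded below, whereas my candidate typically fails both the Lipschitz and the lower-boundedness requirements (e.g.\ when $\varphi(x)=\frac12|x|^2$). However, the inequality $\overline{\mathcal{T}}_2(\nu|\mu) \geq \int Q_2 f\, d\mu - \int f\, d\nu$ actually used is a soft consequence of Jensen applied to the convex $f$ combined with the pointwise bound $Q_2 f(x)\leq f(y)+|y-x|^2$, and therefore holds for \emph{any} convex $f$ provided both integrals are well defined. To secure that, I will use that $\nabla\varphi$ being $1$-Lipschitz forces $\varphi$ to have at most quadratic growth, so $\varphi\in L^1(\mu)$, and then transfer this via the Fenchel identity to $\varphi^{*}\in L^1(\nu)$, using $\mu,\nu\in\mathcal{P}_2(\mathbb{R}^d)$. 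Beyond this bookkeeping, the argument is essentially a one-page manipulation.
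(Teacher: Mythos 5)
Your proposal is correct and follows essentially the same route as the paper: the implications (b)$\Leftrightarrow$(c) and (b)$\Rightarrow$(a) via Proposition~\ref{prop:projection} and Theorem~\ref{main-result}, and the substantive step via the dual candidate $f=2\varphi^*-|\cdot|^2$, the identity $Q_2f(x)=|x|^2-2\varphi(x)$, the Fenchel equality, and the same chain of inequalities $W_2^2(\nu,\mu)\le\int|\nabla\varphi-x|^2\,d\mu=\int Q_2f\,d\mu-\int f\,d\nu\le\overline{\mathcal{T}}_2(\nu|\mu)\le W_2^2(\nu,\mu)$, with the same integrability bookkeeping. The only (harmless) difference is organizational: you read off (c) directly from this chain to close the cycle as (a)$\Rightarrow$(c)$\Rightarrow$(b)$\Rightarrow$(a), whereas the paper uses the chain to deduce dual optimality of $f$ and hence (a)$\Rightarrow$(b).
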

In dimension $1$, the equivalence between (c) and (a) has been obtained by Shu in a slightly different form in \cite{Shu}.

Let us denote by $\gamma_d$ the standard Gaussian measure on $\R^d$ and recall the statement of Caffarelli's contraction theorem.
\begin{thm}[Caffarelli's contraction theorem \cite{Caffarelli00}]\label{thm:Caffarelli}
If $\nu$ is a probability measure with a density with respect to $\gamma_d$ of the form $e^{-V}$, with $V : \R^d\to \R\cup \{\infty\}$ a convex function, then there exists a convex function $\varphi$ of class $\mathcal{C}^1$ on $\R^d$ such that the Brenier transport map $\nabla \varphi$ from $\gamma_d$ to $\nu$ is $1$-Lipschitz.
\end{thm}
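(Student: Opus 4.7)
The plan is to prove that the Brenier map $T=\nabla\varphi$ pushing $\gamma_d$ forward to $\nu$ satisfies $D^2\varphi\le I$ pointwise, so that $\nabla\varphi$ is $1$-Lipschitz, via a maximum principle argument applied to the largest eigenvalue of $D^2\varphi$. The $\mathcal{C}^1$ regularity of $\varphi$ will then follow \emph{a posteriori}, since a convex function whose gradient is Lipschitz is automatically $\mathcal{C}^{1,1}$.

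First I would reduce to a regularized setting in which $V$ is smooth and strongly convex and $\gamma_d,\nu$ are replaced by smooth, strictly positive, compactly supported approximations on convex domains. In such a setting, Caffarelli's regularity theory for the Monge--Amp\`ere equation ensures that the Brenier potential $\varphi$ is $\mathcal{C}^{2,\alpha}$ up to the boundary of the source support, and it satisfies
\[
\log\det D^2\varphi(x)\;=\;V(\nabla\varphi(x))+\tfrac12|\nabla\varphi(x)|^2-\tfrac12|x|^2+C.
\]

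The core of the argument is a maximum principle bound on this Hessian. Let $x_0$ be an interior maximum of the largest eigenvalue of $D^2\varphi$ and let $\xi$ be a unit eigenvector at $x_0$; the scalar function $w(x):=\langle\xi,D^2\varphi(x)\,\xi\rangle$ then also attains its maximum at $x_0$, so $\nabla w(x_0)=0$ and $D^2 w(x_0)\le 0$. Differentiating the displayed Monge--Amp\`ere equation twice in the direction $\xi$ and evaluating at $x_0$ yields an identity of the schematic form $A-B=C+w(x_0)^2-1$, where $A=\operatorname{tr}\bigl((D^2\varphi)^{-1}D^2 w\bigr)(x_0)\le 0$ by the maximum principle, $B\ge 0$ is a squared Hilbert--Schmidt term with a favorable sign, $C=\partial_\xi^2 V(\nabla\varphi(x_0))\ge 0$ by convexity of $V$, and the first-order terms vanish thanks to $\nabla w(x_0)=0$. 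Rearranging forces $w(x_0)^2\le 1$, i.e.\ $\lambda_{\max}(D^2\varphi)\le 1$ everywhere. The main obstacle is exactly this second-derivative bookkeeping: one must carry out the tensor calculus carefully and identify the constant $-1$ coming from $\partial_\xi^2(-|x|^2/2)$, which is precisely the feature that ties the contraction property specifically to the Gaussian source.

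Finally, to return to the original statement, I would pass to the limit along the regularizing sequence. The Brenier maps $T_\varepsilon$ converge in $L^2(\gamma_d)$ to the Brenier map $T=\nabla\varphi$ from $\gamma_d$ to $\nu$, and the $1$-Lipschitz property passes to this limit. This produces a convex $\varphi$ with $\nabla\varphi$ $1$-Lipschitz and $\nabla\varphi_\#\gamma_d=\nu$; combined with convexity, the Lipschitz gradient forces $\varphi\in\mathcal{C}^{1,1}$, in particular of class $\mathcal{C}^1$, which completes the proof.
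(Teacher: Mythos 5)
The first thing to say is that the paper does not prove this statement at all: Theorem \ref{thm:Caffarelli} is quoted from \cite{Caffarelli00} and used as a black box, and the authors explicitly remark that deriving it from their own machinery (by showing directly that $\overline{\gamma_d}=\nu$) is an open problem deferred to later work. So there is no internal proof to compare yours with; what you have written is a sketch of Caffarelli's original maximum-principle argument, and it must be judged on its own terms.

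The core computation is right. Writing the Monge--Amp\`ere equation in the logarithmic form you display and differentiating twice along a unit eigenvector $\xi$ of $D^2\varphi(x_0)$, the left side gives $\mathrm{tr}\bigl((D^2\varphi)^{-1}D^2w\bigr)-\mathrm{tr}\bigl((D^2\varphi)^{-1}(\partial_\xi D^2\varphi)(D^2\varphi)^{-1}(\partial_\xi D^2\varphi)\bigr)$, the right side gives $\langle D^2V\, D^2\varphi\,\xi, D^2\varphi\,\xi\rangle+|D^2\varphi\,\xi|^2-1$ plus first-order terms in $\nabla w$, and at an interior critical maximum all the signs line up to force $w(x_0)^2\le 1$. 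The genuine gap is the sentence ``Let $x_0$ be an interior maximum of the largest eigenvalue of $D^2\varphi$.'' In the compactly supported regularization you set up, $\lambda_{\max}(D^2\varphi)$ is continuous up to the boundary of the source domain but its maximum can perfectly well sit \emph{on} that boundary, where $\nabla w(x_0)=0$ and $D^2w(x_0)\le 0$ both fail and the whole argument collapses; nothing in your reduction rules this out. This attainment issue is precisely the delicate point of Caffarelli's proof: his argument does not truncate the Gaussian, but works on all of $\R^d$ with second-order incremental quotients $\varphi(x+h\xi)+\varphi(x-h\xi)-2\varphi(x)$, and uses the coercivity coming from the $-|x|^2/2$ term (i.e.\ the uniform log-concavity of the source at infinity) to show that a near-supremum point can be taken in a bounded region where the maximum-principle computation applies up to a controllable error. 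You need either this non-compact localization, or a regularization that keeps the source uniformly log-concave on the whole space, before the pointwise bound $D^2\varphi\le I$ can be extracted. The final limiting step and the recovery of $\mathcal{C}^{1,1}$ regularity from a $1$-Lipschitz gradient are fine (and consistent with Lemma \ref{lem:contraction} of the paper).
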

Theorem \ref{thm:Caffarelli} is an important result with numerous applications in the field of functional inequalities \cite{Harge01,Cordero02, Milman18}. Note that the assumption on $\nu$ can equivalently be formulated as: $\nu$ has density $e^{-U}$ with respect to the Lebesgue measure where $x\mapsto U(x)-|x|^2/2\in \R\cup \{\infty\}$ is convex. Such measures are called \emph{$1$-uniformly log-concave} in the literature.

 The following corollary is an immediate consequence of Theorems \ref{thm:equivalence} and \ref{thm:Caffarelli}.
\begin{cor}
If $\nu$ is a probability measure with a density with respect to $\gamma_d$ of the form $e^{-V}$, with $V : \R^d\to \R\cup \{+\infty\}$ a convex function, then the projection $\overline{\gamma_d}$ of $\gamma_d$ on $B_\nu$ is equal to $\nu$.
\end{cor}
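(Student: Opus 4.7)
The plan is to chain Caffarelli's contraction theorem with the equivalence (a) $\Leftrightarrow$ (b) of Theorem \ref{thm:equivalence}, applied to $\mu = \gamma_d$. Since the corollary is explicitly described as an immediate consequence of these two results, no genuinely new argument should be required; the proof is essentially a one-line composition of the two theorems.

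First, I would apply Theorem \ref{thm:Caffarelli} to the measure $\nu$: the hypothesis that $d\nu/d\gamma_d = e^{-V}$ with $V : \R^d \to \R\cup\{+\infty\}$ convex is exactly the input that Caffarelli's theorem requires, so it produces a convex function $\varphi \in \mathcal{C}^1(\R^d)$ such that $\nabla \varphi$ is $1$-Lipschitz and $\nu = \nabla \varphi_\# \gamma_d$.

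Next, before invoking Theorem \ref{thm:equivalence}, I would briefly check its ambient $\mathcal{P}_2$-hypothesis: $\gamma_d$ obviously lies in $\mathcal{P}_2(\R^d)$, and since $\nabla \varphi$ is $1$-Lipschitz, pushing forward by $\nabla \varphi$ preserves finiteness of the second moment, so $\nu = \nabla \varphi_\# \gamma_d$ also lies in $\mathcal{P}_2(\R^d)$. The function $\varphi$ just produced is then exactly a witness of condition (a) in Theorem \ref{thm:equivalence} with $\mu = \gamma_d$, so the implication (a) $\Rightarrow$ (b) of that theorem yields $\overline{\gamma_d} = \nu$, which is the claim.

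I do not anticipate any real obstacle: each step is a direct application of an already-proved theorem. The only minor point worth explicitly writing out is the verification that $\nu \in \mathcal{P}_2(\R^d)$ (so that Theorem \ref{thm:equivalence} applies), and that the $\varphi$ delivered by Caffarelli matches, verbatim, the format required by condition (a) — both of which are immediate.
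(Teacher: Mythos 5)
Your proof is correct and is exactly the argument the paper intends: the corollary is stated there as an immediate consequence of Theorems \ref{thm:equivalence} and \ref{thm:Caffarelli}, obtained by feeding Caffarelli's $1$-Lipschitz Brenier map into the implication (a) $\Rightarrow$ (b) with $\mu=\gamma_d$. Your extra check that $\nu\in\mathcal{P}_2(\R^d)$ is a reasonable (and correct) detail the paper leaves implicit.
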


\begin{rem}Let $d\nu = e^{-V}\,d\gamma_d$ with $V$ convex on $\R^d.$
\begin{itemize}
\item[-] According to Theorem \ref{thm:equivalence} the conclusion of Theorem \ref{thm:Caffarelli} is logically equivalent to the statement $\overline{\gamma_d} = \nu.$ It would be very interesting to prove directly that $\overline{\gamma_d} = \nu$ since this would give an alternative proof Theorem \ref{thm:Caffarelli}. This question will be considered elsewhere.
\item[-] To complete the picture, let us mention a nice result of Harg\'e \cite{Harge01} that states that if in addition $\int x\,d\nu(x) = 0$, then $\nu \preceq_c \gamma_d$.
\end{itemize}
\end{rem}

Now let us turn to the proof of Theorem \ref{thm:equivalence}. We will need the following classical lemma (see \cite[Theorem E 4.2.1]{HUL01}) whose proof is recalled in Section \ref{sec:proofs} for the sake of completeness.
\begin{lem}\label{lem:contraction}
Let $g:\R^d\to \R\cup\{+\infty\}$ be a lower semi-continuous convex function such that $g(x_o)<+\infty$ for some $x_o \in \R^d$ and recall $g^*(y) = \sup_{x\in \R^d}\{x\cdot y - g(x)\}$, $y\in \R^d$. \linebreak
The following are equivalent: 
\begin{itemize}
\item[(a)] The function $x\mapsto g(x) - \frac{|x|^2}{2}$ is convex on $\R^d$.
\item[(b)] The function $y\mapsto  \frac{|y|^2}{2} - g^*(y)$ is convex and finite valued on $\R^d$.
\item[(c)] The function  $g^*$ is of class $\mathcal{C}^1$ and $\nabla g^*$ is $1$-Lipschitz on $\R^d.$
\end{itemize}
\end{lem}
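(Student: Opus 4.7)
\emph{Proof plan.} This is a classical instance of the Legendre duality between $1$-strong convexity of $g$ and $1$-Lipschitz smoothness of $g^*$. My plan is to establish (a)$\Rightarrow$(c)$\Rightarrow$(b)$\Rightarrow$(a), using the Fenchel--Moreau biconjugation theorem $g=g^{**}$ (valid since $g$ is proper, lower semi-continuous and convex) together with elementary manipulations involving $q:=|\cdot|^2/2$.

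For (a)$\Rightarrow$(c), I would write $g=q+h$ with $h$ convex and lower semi-continuous. For each $y\in\R^d$, the map $x\mapsto x\cdot y-g(x)=(x\cdot y-q(x))-h(x)$ is $1$-strongly concave, upper semi-continuous, and bounded above (the affine minorant of $h$ yields a quadratic lower bound for $g$, hence an upper bound for $x\cdot y-g(x)$); it therefore attains its supremum at a unique point $x(y)\in\R^d$. Uniqueness of this maximizer gives $\partial g^*(y)=\{x(y)\}$, so $g^*$ is finite and differentiable at $y$ with $\nabla g^*(y)=x(y)$. The first-order condition $y-x(y)\in\partial h(x(y))$ (coming from $\partial g=\mathrm{Id}+\partial h$) combined with monotonicity of $\partial h$ then gives
\[
|x(y_1)-x(y_2)|^2\le\langle y_1-y_2,\,x(y_1)-x(y_2)\rangle,
\]
from which Cauchy--Schwarz yields $|\nabla g^*(y_1)-\nabla g^*(y_2)|\le|y_1-y_2|$.

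The implication (c)$\Rightarrow$(b) is the standard descent lemma: integrating $\nabla g^*$ along the segment $[y,y']$ and using its $1$-Lipschitz property gives $g^*(y')\le g^*(y)+\nabla g^*(y)\cdot(y'-y)+q(y'-y)$, which is precisely the convexity of $q-g^*$, and $g^*$ is finite everywhere by (c). For (b)$\Rightarrow$(a), set $k:=q-g^*$, which is convex and finite by assumption. Using $g=g^{**}$, the substitution $y=x+u$ together with the expansion $q(x+u)=q(x)+x\cdot u+q(u)$ yields
\[
g(x)=\sup_{y\in\R^d}\{x\cdot y-q(y)+k(y)\}=q(x)+\sup_{u\in\R^d}\{k(x+u)-q(u)\}.
\]
For each fixed $u$, the function $x\mapsto k(x+u)-q(u)$ is convex (composition of $k$ with an affine map, shifted by a constant), so the supremum over $u$ is convex in $x$. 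Hence $g-q$ is convex, which is (a).

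The proof is entirely standard convex analysis and presents no real obstacle; the two points that deserve care are the verification that $g^*$ is finite everywhere under (a) (used implicitly to identify $\nabla g^*(y)$ with the unique maximizer) and the substitution $y=x+u$ in the last step, which is the trick that converts the biconjugate formula into a supremum of functions convex in $x$.
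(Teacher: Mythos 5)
Your proof is correct; every step goes through, and the two delicate points you flag (finiteness of $g^*$ under (a), and the substitution $y=x+u$) are exactly the right ones to flag. The route is, however, organized differently from the paper's. You close a single cycle (a)$\Rightarrow$(c)$\Rightarrow$(b)$\Rightarrow$(a), whereas the paper proves (a)$\Leftrightarrow$(b) directly and then adds (a)$\Rightarrow$(c) and (c)$\Rightarrow$(b): for (a)$\Rightarrow$(b) it invokes the infimal-convolution formula $(f+\frac{|\cdot|^2}{2})^* = f^*\,\square\,\frac{|\cdot|^2}{2}$, which immediately exhibits $\frac{|y|^2}{2}-g^*(y)$ as a supremum of convex functions — a step your argument bypasses entirely by routing through (c). For (a)$\Rightarrow$(c) the paper cites the general theorem that the conjugate of a strictly convex cofinite function is $\mathcal{C}^1$ and then combines monotonicity of $\partial\bigl(g-\frac{|\cdot|^2}{2}\bigr)$ with the inversion $\partial g^*=(\partial g)^{-1}$; you instead get differentiability "by hand" from the uniqueness of the maximizer of the $1$-strongly concave map $x\mapsto x\cdot y-g(x)$, and the Lipschitz bound from the first-order condition $y-x(y)\in\partial h(x(y))$ plus monotonicity of $\partial h$. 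These are the same monotonicity computation in different clothing, but yours is more self-contained (it needs only that a finite convex function with singleton subdifferential is differentiable, rather than the regularity theorem for conjugates). For (c)$\Rightarrow$(b) you use the descent lemma where the paper uses gradient monotonicity — both are one-line first-order characterizations of convexity of $\frac{|\cdot|^2}{2}-g^*$. The final step (b)$\Rightarrow$(a), biconjugation followed by the change of variables, is identical to the paper's. In short: your version is leaner and avoids two external citations; the paper's version makes the duality "$g$ more convex than the quadratic $\Leftrightarrow$ $g^*$ less convex than the quadratic" visible without passing through differentiability.
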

Notice that (a) quantitatively states that $g$ is more convex than $x\mapsto|x|^2/2$ while (b) states that $g^*$ is less convex than $x\mapsto |x|^2/2$. Now let us prove Theorem \ref{thm:equivalence}.
\proof[Proof of Theorem \ref{thm:equivalence}]
Let us show that (a) implies (b). Define $h =\varphi^*$ and $f = 2\varphi^* - |x|^2$. According to Lemma \ref{lem:contraction} (the implication $(c)\Rightarrow (a)$ applied with $g= \varphi^*$), we have that $f$ is convex. An easy calculation shows that $Q_2f(x) = |x|^2 - 2\varphi(x)$, for every $x \in \R^d$, and moreover it holds $\varphi^*(\nabla \varphi(x)) = x\cdot \nabla \varphi(x) - \varphi(x)$, $x \in \R^d.$ This together with the fact that $\nabla \varphi$ sends $\mu$ onto $\nu$ yields that
\begin{align*}
W_2^2(\nu,\mu) & \leq \int |\nabla \varphi(x)-x|^2\,d\mu(x)\\
& = \int |x|^2\,d\mu(x) + \int |y|^2\,d\nu(y) - 2 \int x\cdot \nabla \varphi(x)\,d\mu(x)\\
& =  \int |x|^2\,d\mu(x) + \int |y|^2\,d\nu(y) - 2 \int \varphi(x) + \varphi^*(\nabla \varphi(x))\,d\mu(x)\\
& = \int Q_2f\,d\mu - \int f \,d\nu\\
& \leq \overline{\mathcal{T}}_2(\nu|\mu)\\
& \leq W_2^2(\nu,\mu),
\end{align*}
where the second inequality comes from the convexity of $f$ and the duality formula for $\overline{\mathcal{T}}_2$, see Theorem  \ref{main-result} (a). 
Note that, according to Lemma \ref{lem:contraction} (the implication $(c)\Rightarrow (b)$ with $g=\varphi^*$) we have that $y \mapsto |y|^2/2 - \varphi(y)$ is convex. Therefore it is bounded from below by some affine function and so $\varphi$ is bounded from above by some quadratic function, which shows that $\varphi$ is integrable with respect to $\mu$. Since $x\mapsto \varphi(x) + \varphi^*(\nabla \varphi(x))$ is integrable with respect to $\mu$, one concludes that  $\varphi^*(\nabla \varphi(x))$ is also integrable with respect to $\mu$, which enables to split the integral at the fourth line. This also shows that $f$ is integrable with respect to $\nu.$

Therefore, the function $f$ is optimal for the dual problem and so, according to Theorem \ref{main-result}, $\nu=\nabla h^*_\#\mu = \nabla \varphi_\#\mu = \bar{\mu}$, which shows (b). Now according to Theorem \ref{main-result}, $W_2^2(\bar{\mu},\mu) = \overline{\mathcal{T}}_2(\nu|\mu)$ so (b) implies (c). On the other hand, if $W_2^2(\nu,\mu) = \overline{\mathcal{T}}_2(\nu|\mu)$, then according to Proposition \ref{prop:projection} and Theorem \ref{main-result}, it holds $\bar{\mu} = \mu$, so (c) implies (b) as well. Finally, according to Theorem \ref{main-result}, there always exist a convex function $\varphi$ of class $\mathcal{C}^1$ on $\R^d$ such that $\nabla \varphi$ is $1$-Lipschitz and  $\bar{\mu} = \nabla \varphi_\# \mu$. So (b) implies (a), which completes the proof.
\endproof

\section{A monotonicity theorem}\label{sec:monotonicity}
In this section we still admit the main theorem of this paper, Theorem \ref{main-result}. We generalize the notion of $c$-monotonicity, which plays an important role in optimal transport theory \cite{RR90, GMc96}, to cost functions $c$ defined on $\R^d \times \mathcal{P}(\R^d)$ and we use it to formulate a necessary condition of optimality for transport plans. Then we give a new proof of Strassen theorem on the existence of submartingale couplings with given marginals in dimension $1$ using this $c$-monotonicity criterium.

\subsection{A necessary condition of optimality} 
The following definition generalizes the notion of $c$-monotonicity ; we refer to \cite{RR98,Vil03,Vil09} and the references therein for a complete account on the subject. In what follows, we denote by $\mathcal{P}_1(\R^d)$ the set of probability measures on $\R^d$ having finite first moment.
\begin{defi}
Let $c: \R^d \times \mathcal{P}_1(\R^d) \to \R^+$ be a cost function. 
We will say that a set $\Gamma \subset \R^d \times \mathcal{P}_1(\R^d)$ is $c$-monotone if for any $N \geq 1$ and points $(x_i,p_i) \in \Gamma$ and probability measures $q_i \in \mathcal{P}_1(\R^d)$, $i \in \{1,\ldots, N\}$ such that $\sum_{i=1}^N p_i = \sum_{i=1}^N q_i $, it holds 
\[
\sum_{i=1}^N c(x_i,p_i) \leq \sum_{i=1}^N c(x_i,q_i).
\]
\end{defi}
We now formulate a necessary condition for optimality of a coupling for the transport cost $\overline{\mathcal{T}_2}$ which we recall is associated to the cost function $c_2$ defined by
\[
c_2(x,p) = \left|\int y\,dp(y)-x\right|^2,\qquad x \in \R^d, \qquad p \in \mathcal{P}_1(\R^d).
\]
\begin{thm}\label{thm:monoto}
Let $\mu,\nu \in \mathcal{P}_2(\R^d)$. There exists a set $\Gamma \subset \R^d \times \mathcal{P}_1(\R^d)$ which is monotone with respect to the cost function $c_2$ and such that if $d\pi(x,y) = d\mu(x)dp_x(y)$ is an optimal coupling for the cost $\overline{\mathcal{T}}_2(\nu|\mu)$, it holds 
\[
\mu\left( \{x\in \R^d : (x,p_x) \in \Gamma\}\right)=1.
\]
\end{thm}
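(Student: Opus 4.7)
The plan is to extract a dual optimizer from Theorem \ref{main-result}(a) and to define $\Gamma$ as the set of pairs $(x,p)$ on which the natural pointwise inequalities underlying the duality formula are saturated. More precisely, I would first fix a lower semi-continuous convex $f^\circ:\R^d\to \R\cup\{+\infty\}$, integrable with respect to $\nu$, such that
\[
\overline{\mathcal{T}}_2(\nu|\mu)=\int Q_2 f^\circ\,d\mu-\int f^\circ\,d\nu,
\]
and then rewrite the two trivial inequalities hidden in Theorem \ref{them:duality}: for every $x\in \R^d$ and every $p\in \mathcal{P}_1(\R^d)$, denoting $\bar p:=\int y\,dp(y)$, one has
\[
Q_2 f^\circ(x)\leq f^\circ(\bar p)+|\bar p-x|^2
\qquad\text{and}\qquad
f^\circ(\bar p)\leq \int f^\circ\,dp,
\]
the first by the very definition of $Q_2 f^\circ$ applied at $y=\bar p$, the second by Jensen's inequality (note that $f^\circ$ is bounded below by an affine function, so all integrals are well-defined in $\R\cup\{+\infty\}$). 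Subtracting yields
\[
Q_2 f^\circ(x)-\int f^\circ\,dp\leq c_2(x,p).
\]

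With this in hand, I would define
\[
\Gamma=\left\{(x,p)\in\R^d\times\mathcal{P}_1(\R^d)\,:\,Q_2 f^\circ(x)=f^\circ(\bar p)+|\bar p-x|^2\ \text{and}\ f^\circ(\bar p)=\int f^\circ\,dp\right\}.
\]
The ``full $\mu$-measure'' part is then essentially complementary slackness: if $d\pi(x,y)=d\mu(x)\,dp_x(y)$ is optimal, integrating the above pointwise inequality (with $p=p_x$) against $\mu$ gives $\overline{\mathcal{T}}_2(\nu|\mu)$ on both sides (using $\int\!\!\int f^\circ\,dp_x\,d\mu=\int f^\circ\,d\nu$), so the inequality is an equality for $\mu$-almost every $x$, hence both underlying inequalities are tight $\mu$-almost everywhere and $(x,p_x)\in\Gamma$ for such $x$.

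It remains to verify $c_2$-monotonicity of $\Gamma$, which is a purely algebraic check. Take $(x_i,p_i)\in\Gamma$ and arbitrary $q_i\in\mathcal{P}_1(\R^d)$, $i=1,\dots,N$, with $\sum_i p_i=\sum_i q_i$. Applying the general pointwise inequality to $(x_i,q_i)$ gives
\[
c_2(x_i,q_i)\geq Q_2 f^\circ(x_i)-\int f^\circ\,dq_i,
\]
while the definition of $\Gamma$ forces equality at $(x_i,p_i)$. Summing over $i$ and using
\[
\sum_{i=1}^N \int f^\circ\,dq_i=\int f^\circ\,d\Big(\sum_i q_i\Big)=\int f^\circ\,d\Big(\sum_i p_i\Big)=\sum_{i=1}^N\int f^\circ\,dp_i,
\]
the $Q_2 f^\circ$ terms cancel as well, and one obtains $\sum_i c_2(x_i,p_i)\leq \sum_i c_2(x_i,q_i)$, as required.

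I do not expect a serious obstacle here: the only substantial input is the existence of the convex dual optimizer $f^\circ$ (Theorem \ref{main-result}(a)), after which both the complementary slackness step and the monotonicity verification are elementary. The only technical care is to ensure all integrals make sense, which follows from $f^\circ$ being $\nu$-integrable and bounded below by an affine function (so $\int f^\circ\,dp$ is well-defined in $\R\cup\{+\infty\}$ for every $p\in\mathcal{P}_1(\R^d)$), and from the fact that $\int Q_2 f^\circ\,d\mu<+\infty$ by optimality in the duality formula.
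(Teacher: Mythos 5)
Your proposal is correct and follows essentially the same route as the paper: extract the dual optimizer $f^\circ$ from Theorem \ref{main-result}(a), define $\Gamma$ as the set where the pointwise inequality $Q_2 f^\circ(x)\leq c_2(x,p)+\int f^\circ\,dp$ (a combination of the definition of $Q_2$ and Jensen) is saturated, obtain full $\mu$-measure by complementary slackness, and verify $c_2$-monotonicity by the same summation/cancellation argument. Your $\Gamma$, written as the conjunction of the two intermediate equalities, coincides with the paper's, since the combined equality forces both.
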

\begin{rem}
The paper \cite{BBP} contains a general version of Theorem \ref{thm:monoto} relying on a completely different proof (see \cite[Theorem 3.4]{BBP}). It is also shown in \cite[Theorem 3.6]{BBP} that, under mild assumptions on the cost function (verified for instance by the cost $c_2$), $c-$monotonicity is also a sufficient condition of optimality.
\end{rem}
\proof Recall the duality formula 
\[
\overline{\mathcal{T}}_2(\nu|\mu) = \sup_{f}\left\{ \int Q_2f(x)\,d\mu(x) - \int f(y)\,d\nu(y)\right\},
\]
where the infimum is running over the set of convex functions bounded from below and $Q_2f(x) = \inf_{y\in \R^d}\{f(y) + |y-x|^2\}$, $x \in\R^d$.
Note that if $f$ is convex, then
\[
Q_2f(x) = \inf_{p \in \mathcal{P}_1(\R^d)} \left\{c_2(x,p)+ \int f\,dp\right\},\qquad x \in \R^d.
\]
Indeed, the inequality $\geq$ is obtained by taking $p=\delta_y$. Moreover, since $f$ is convex, it holds
\[
\int f(y)\,dp(y) + c_2(x,p) \geq f\left(\int y\,dp(y)\right) +\left |\int y\,dp(y)-x\right|^2 \geq Q_2f(x)
\]
so taking the infimum over $p$ gives the converse inequality.
Let $f^\circ$ be such that $\overline{\mathcal{T}}_2(\nu|\mu) =  \int Q_2f^\circ(x)\,dp(x) - \int f^\circ(y)\,d\nu(y)$ (which exists according to Theorem \ref{main-result}). Then, it holds
\begin{align*}
\overline{\mathcal{T}}_2(\nu|\mu) &=  \int Q_2f^\circ(x)\,d\mu(x) - \int f^\circ(y)\,d\nu(y)\\
& \leq \int \left( \int f^\circ(y)\,dp_x(y) + c_2(x,p_x) \right)\,d\mu(x) - \int f^\circ(y)\,d\nu(y)\\ 
&= \int c_2(x,p_x)\,d\mu(x) = \overline{\mathcal{T}}_2(\nu|\mu).
\end{align*}
Therefore, $Q_2f^\circ(x) = \int f^\circ(y)\,dp_x(y) + c_2(x,p_x) $ for $\mu$-almost all $x \in \R^d.$
So denoting by $\Gamma$ the set of couples $(x,p)$ so that $Qf^\circ(x) = \int f^\circ\,dp + c_2(x,p)$, we have that $\mu(\{x \in \R^d : (x,p_x) \in \Gamma\})=1.$ Now, let us check that $\Gamma$ is monotone with respect to the cost function $c_2$. Take a family of points $(x_i,p_i) \in \Gamma$ and probability measures $q_i \in \mathcal{P}_1(\R^d)$, $i \in \{1,\ldots, N\}$ such that $\sum_{i=1}^N p_i = \sum_{i=1}^N q_i $. Then, it holds
\[
Q_2f^\circ(x_i) = c_2(x_i,p_i)+ \int f^\circ\,dp_i,\qquad \forall i \in \{1,\ldots, N\}
\]
and on the other hand
\[
Q_2f^\circ(x_i) \leq  c_2(x_i,q_i)+\int f^\circ\,dq_i , \qquad \forall i \in \{1,\ldots, N\}.
\]
Summing these inequalities, and using the fact that $\sum_{i=1}^N p_i = \sum_{i=1}^N q_i $ gives immediately $\sum_{i=1}^N c(x_i,p_i) \leq \sum_{i=1}^N c(x_i,q_i).$ \endproof

\begin{rem}
Using the notation of Theorem \ref{main-result}, let us show how, with Theorem \ref{thm:monoto}, one can recover the fact proved in Theorem \ref{main-result} that $\nabla \varphi$ is $1$-Lipschitz on the support of $\mu$. Let $d\pi(x,y) = d\mu(x)dp_x(y)$ be an optimal transport plan from $\mu$ to $\nu$ and $\Gamma$ be a $c_2$-monotone set such that $(x,p_x) \in \Gamma$ for $\mu$ almost every $x \in \R^d.$  If $(x,p_x)$ and $(y,p_y)$ are elements of $\Gamma$, then comparing this pair with $(x,(1-\eps)p_x+\eps p_y)$ and $(y,(1-\eps)p_y+\eps p_x)$ where $\varepsilon \in [0,1]$  yields
\[
|x-\bar{x}|^2+|y-\bar{y}|^2\leq |x-[(1-\eps)\bar{x}+\eps \bar{y}]|^2+|y-[(1-\eps)\bar{y}+\eps \bar{x}]|^2,
\]
where $\bar{x}=\int z \,dp_x(z)$ and $\bar{y}=\int z \,dp_z(z)$. 
Noting that this is an equality for $\eps=0$ and taking the right derivative at $\eps=0_+$, we thus obtain
\[2\langle \bar{x}-x,\bar{y}-\bar{x}\rangle-2\langle \bar{y}-y,\bar{y}-\bar{x}\rangle=2\langle y-x,\bar{y}-\bar{x}\rangle-2|\bar{y}-\bar{x}|^2\geq 0,
\]
hence $|\bar{y}-\bar{x}|\leq |y-x|$. Since $\bar{x} = \nabla \varphi(x)$ and $\bar{y}=\nabla \varphi(y)$ this proves that $\nabla \varphi$ is $1$-Lipschitz on the support of $\mu$.
\end{rem}

\subsection{An example related to the increasing convex order}

\begin{defi}
Let $\mu,\nu$ be two probability measures on $\R$  ; $\mu$ is dominated by $\nu$ for the \emph{stochastic order}, denoted by $\mu\leqs \nu$, if $\int f\,d\mu \leq \int f\,d\nu$ for all bounded increasing functions $f:\R\to\R$. Assuming that $\mu,\nu$ have finite first moments, $\mu$ is dominated by $\nu$ for the \emph{increasing convex order}, denoted by $\mu \leqcs \nu$, if $\int f \,d\mu\leq \int f \,d\nu$ for every increasing convex function $f:\R\to \R$. 
\end{defi}
Notice that if $\int x\,d\mu(x)=\int y\,d\nu(y)$ and $\mu\leqcs \nu$, then it is easy to see that the inequality $\int f\,d\mu\leq \int f\,d\nu$ can be extended, first to all convex functions $f$ with finite asymptotic slopes at $- \infty$, and second, by using the monotone convergence theorem, to all convex functions. Therefore, for probability measures with finite first moments, $\mu\leqc \nu$ is equivalent to ``$\mu\leqcs \nu$ and $\int x\,d\mu(x)=\int x\,d\nu(x)$''.

In this section we prove that on $\R$ under the assumption $\mu\leqcs \nu$, our Theorem \ref{main-result} on the quadratic barycentric transport problem can be completed with new informations.

\begin{thm}\label{csto}
Let $\mu, \nu \in \mathcal{P}_2(\R)$ be such that $\mu\leqcs \nu$. Then, with the notations of  Proposition \ref{prop:projection} and Theorem \ref{main-result}, the probability measure $\bar{\mu}$ satisfies $\mu \leqs \bar{\mu}$ and any optimal coupling $(X,Y)$ satisfies $X\leq \E[Y|X]$ almost surely.
\end{thm}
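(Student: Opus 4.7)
The plan is to use the dual formula of Theorem \ref{main-result}(a) together with the hypothesis $\mu \leqcs \nu$ to show that the optimal dual potential $f^\circ$ can be chosen to be \emph{non-increasing}. Via the conjugacy $\varphi = h^*$ with $h = (f^\circ + |\cdot|^2)/2$, this property translates to $\nabla \varphi(x) \geq x$ for every $x \in \R$, from which both conclusions follow: $\mu \leqs \bar{\mu}$ via the coupling $(X, \nabla \varphi(X))$, and $X \leq \E[Y|X]$ for any optimal coupling via the identity $\E[Y|X] = \nabla \varphi(X)$ of Theorem \ref{main-result}(c).

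The central construction is the non-increasing envelope: given a convex lsc function $f : \R \to \R \cup \{+\infty\}$ bounded below with finite minimizer $y_0$, set $\tilde f(y) = f(y)$ for $y \leq y_0$ and $\tilde f(y) = f(y_0)$ for $y \geq y_0$. Then $\tilde f$ is convex non-increasing, and $u := f - \tilde f$ is convex, non-decreasing, non-negative, and vanishes on $(-\infty, y_0]$. A direct case analysis on the minimizer $y^*(x)$ of $y \mapsto \tilde f(y) + (y-x)^2$ shows that $y^*(x) = x$ when $x \geq y_0$ (since $\tilde f$ is constant there) and $y^*(x) \leq y_0$ when $x \leq y_0$; in either case $u(y^*(x)) = u(x)$, since $u$ vanishes below $y_0$. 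Combined with $\tilde f = f - u$ and the bound $f(y^*(x)) + (y^*(x) - x)^2 \geq Q_2 f(x)$, this yields $Q_2 \tilde f(x) \geq Q_2 f(x) - u(x)$.

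Integrating this inequality against $\mu$ and subtracting $\int \tilde f\,d\nu = \int f\,d\nu - \int u\,d\nu$ gives
\[
\int Q_2 \tilde f\,d\mu - \int \tilde f\,d\nu \;\geq\; \Bigl(\int Q_2 f\,d\mu - \int f\,d\nu\Bigr) + \Bigl(\int u\,d\nu - \int u\,d\mu\Bigr),
\]
and the last bracket is non-negative because $u$ is convex non-decreasing and $\mu \leqcs \nu$. Hence $\tilde f$ is at least as good a candidate as $f$ in the dual. Applied to the optimizer $f^\circ$ of Theorem \ref{main-result}(a), the envelope $\tilde f^\circ$ is also optimal, so we may assume $f^\circ$ is itself convex non-increasing, and thus $f^{\circ\prime} \leq 0$ (in the sub/superdifferential sense). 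Consequently $h'(x) = f^{\circ\prime}(x)/2 + x \leq x$ for every $x$; since $\varphi'(y)$ maximizes the concave map $x \mapsto xy - h(x)$ whose right-derivative at $x = y$ equals $y - h'_+(y) \geq 0$, concavity forces $\varphi'(y) \geq y$ for every $y \in \R$, which closes the argument.

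The main technical subtlety is the handling of degenerate cases where $f^\circ$ has no finite minimizer: if $f^\circ$ is already non-increasing no modification is needed, while if $f^\circ$ is strictly increasing then its envelope is constant, and the above inequality forces $\overline{\mathcal{T}}_2(\nu|\mu) \leq 0$, whence $\bar{\mu} = \mu$ and the conclusions hold trivially.
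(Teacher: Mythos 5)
Your proof is correct, but it follows a genuinely different route from the paper's. The paper works on the primal side: it invokes the $c_2$-monotonicity necessary condition of Theorem \ref{thm:monoto}, derives from it a variational inequality comparing the supports of the kernels $p_x$, and then rules out $\mu(A_{]-\infty,0[})>0$ by a case analysis ending in a contradiction obtained by integrating the test function $x\mapsto (x-m)_+$ against $\mu\leqcs\nu$. You instead work on the dual side: you monotonize the dual optimizer by splitting $f^\circ=\tilde f^\circ+u$ with $\tilde f^\circ$ convex non-increasing and $u$ convex non-decreasing, show via the pointwise inequality $Q_2\tilde f^\circ\geq Q_2f^\circ-u$ and the hypothesis $\mu\leqcs\nu$ that $\tilde f^\circ$ is again optimal, and then read off $\nabla\varphi(y)\geq y$ from $h'_+\leq \mathrm{id}$. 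Your argument buys a slightly sharper output (the inequality $\nabla\varphi(y)\geq y$ holds for \emph{every} $y\in\R$, not only $\mu$-a.e.) and avoids the case discussion entirely; the paper's argument, on the other hand, is designed to showcase the $c$-monotonicity criterion, which is the point of Section \ref{sec:monotonicity}, and does not require touching the dual optimizer. Three small points deserve a line in a polished write-up: (i) the weak duality inequality $\int Q_2\tilde f^\circ\,d\mu-\int\tilde f^\circ\,d\nu\leq \overline{\mathcal{T}}_2(\nu|\mu)$ must be justified for the non-Lipschitz, possibly extended-valued $\tilde f^\circ$ (it follows from Jensen's inequality exactly as in the proof of Theorem \ref{thm:monoto}); (ii) replacing $f^\circ$ by $\tilde f^\circ$ is legitimate because Items (b) and (c) of Theorem \ref{main-result} apply to \emph{any} dual optimizer; (iii) in the degenerate case where $f^\circ$ is strictly increasing and unbounded below, the "constant envelope" does not literally exist, but the intended conclusion $\overline{\mathcal{T}}_2(\nu|\mu)\leq 0$ follows directly since $Q_2f^\circ\leq f^\circ$ and $Q_2f^\circ$ is itself non-decreasing and convex, whence $\int Q_2f^\circ\,d\mu\leq\int Q_2f^\circ\,d\nu\leq\int f^\circ\,d\nu$.
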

In the terminology of martingale transport, $(X,Y)$ is a submartingale coupling and its law a submartingale transport plan.

Before we prove Theorem \ref{csto}, note that conversely to it, if $(X,Y)$ is a coupling of $\mu$ and $\nu$ such that $X \leq \E[Y|X]$ almost surely, then for every increasing and convex function $f:\R\to \R$, it holds 
\[
\E[f(Y)]\geq \E[\E[f(Y)|X]]\geq \E[f(\E[Y|X])]\geq \E[f(X)].
\]
and so $\mu \leqcs \nu$.
We therefore recover the classical result by Strassen \cite[Theorem 9]{Str65} on submartingale couplings (under the additional assumption that the measures have finite second moments): $\mu\leqcs \nu$ if and only if there exists $X\sim \mu$ and $Y\sim \nu$ in the same probability space with $X \leq \E[Y|X]$ almost surely.

\begin{rem}
Theorem \ref{csto} also gives back Strassen Theorem in the very specific case of the real line and measures with finite second moments. Let us prove both implications. The fact that $\E[Y|X]=X$ implies $\mu\leqc \nu$ is an obvious consequence of the conditional Jensen inequality. Conversely, if $\mu\leqc \nu$ first notice  $\mu\leqcs \nu$. Therefore according to Theorem \ref{csto} there exists $X\sim \mu$ and $Y\sim \nu$ in the same probability space with $\E[Y|X]\geq X$. Moreover, $\mu\leqc \nu$ implies $\int x \,d\mu(x)=\int x \,d\nu(x)$, i.e. $\E[X]=\E[Y]$. Hence $\E[Y|X]$ and $X$ are ordered random variables with the same expectation. They are therefore almost surely equal, which proves the converse implication of Strassen Theorem. For another elementary proof, see \cite[Section 2]{BJ}
\end{rem}

\begin{proof}[Proof of Theorem \ref{csto}]
We denote a solution of the quadratic barycentric problem by $d\pi(x,y)=d\mu(x)dp_x(y)$. Let $(X,Y)$ such that $(X,Y)\sim \pi$ and denote $\E[Y|X]$ by $\bar{X}$. According to Theorem \ref{main-result}, $\pi$ may not be uniquely determined but the law of $\bar{X}$ is uniquely determined and denoted by $\bar{\mu}$.
The inequality $X \leq \E[Y|X]$ almost surely immediately implies that $\mu \leqs \bar{\mu}$.
Now to show that $X \leq \E[Y|X]$ almost surely amounts to show that $\int y \,dp_x(y) \geq x$ for $\mu$ almost all $x \in \R.$ The rest of the proof is devoted to this question.

According to Theorem \ref{thm:monoto}, there exists $\Gamma\subset \R\times \mathcal{P}_1(\R)$ such that, $\mu( \{x \in \R : (x,p_x) \in \Gamma\})=1$ and such that if $(x,p)\in \Gamma$ and $(x',p')\in \Gamma$, 
then for all probability measures $q,q'$ such that $p+p' = q+q'$, it holds 
\begin{equation}\label{eq:mon}
\left|\int y\,dp(y)-x\right|^2+ \left| \int y\,dp'(y)-x'\right|^2 \leq \left| \int y\,dq(y)-x\right|^2+ \left| \int y\,dq'(y)-x'\right|^2.
\end{equation}
Let us show that if $x,x' \in \R$ are such that $(x,p_x) \in \Gamma$ and 
$(x',p_{x'}) \in \Gamma$, then for all $a \in \mathrm{Support}(p_x)$ and $b\in \mathrm{Support}(p_{x'})$, it holds
\begin{equation}\label{eq:mon2}
\left[\left( \int y\,dp_x(y)-x\right)-\left( \int y\,dp_{x'}(y)-x'\right) \right](b-a)\geq 0.
\end{equation}
Let $a \in \mathrm{Support}(p_x)$ and $b\in \mathrm{Support}(p_{x'})$ and $\varepsilon>0$ and define 
\[
r_a = p_x(\,\cdot\, | [a-\varepsilon,a+\varepsilon])\qquad\text{and} \qquad r_b = p_{x'}(\,\cdot\, | [b-\varepsilon,b+\varepsilon]).
\]
For all $t>0$, define
\[
q_x^t =p_x + t(r_b-r_a) \qquad\text{and}\qquad q_{x'}^t= p_{x'}+t(r_a-r_b)
\]
and note that if $t$ is small enough, $q_x^t$ and $q_{x'}^t$ are probability measures such that $q_x^t+q_{x'}^t = p_x+p_{x'}$. Applying \eqref{eq:mon} and letting $t\to 0$ gives
\[
\left[\left( \int y\,dp_x(y)-x\right)-\left( \int y\,dp_{x'}(y)-x'\right) \right]\left(\int y\,dr_b(y) - \int y\,dr_a(y)\right)\geq 0.
\]
Finally, letting $\varepsilon\to 0$ yields to \eqref{eq:mon2}.

Define $\Gamma^1=\{x \in \R : (x,p_x) \in \Gamma\}$ and for any interval $I\subset \R$, let $A_I$ the set of points $x\in \Gamma^1$ such that $\int y \,dp_x(y)-x\in I$. We aim at proving that $\mu(A_{]-\infty,0[})=0$. 

Striving for a contradiction suppose that $\mu(A_{]-\infty,0[})>0$. Our assumption $\mu\leqcs \nu$ implies $\int x \,d\mu(x)\leq\int y \,d\nu(y)$ so that $\int_{\Gamma^1} [(\int y \,dp_x(y))-x]d\mu(x)\geq 0$. It follows that $\mu(A_{[0,+\infty[})>0$. Henceforth $A_{]-\infty,0[}$ and $A_{[0,+\infty[}$ are not empty. 
Several configurations may be considered. 

\begin{itemize}
\item[-] In the first case we assume that there exists $x' \in A_{]-\infty,0[}$ and $x \in A_{[0,+\infty[}$ such that $x'\leq x$.
One thus has $\int y\,dp_{x'}(y)<x'\leq x\leq \int y \,d p_x(y)$. So in this case, one can find $a \in \mathrm{Support}(p_x)$ and $b \in \mathrm{Support}(p_{x'})$ such that $a >b$. Applying \eqref{eq:mon2} with these $a,b$ provides a contradiction.
\item[-] We can now assume that $A_{[0,+\infty[}$ has a supremum smaller than or equal to the infimum of $A_{]-\infty,0[}$. If  there exists  $a\in \mathrm{Support}(p_x)$ and $b \in \mathrm{Support}(p_{x'})$ such that $a >b$, we get the same contradiction as before.
\item[-] We are finally reduced to the case where $A_{[0,+\infty[}$ has a supremum smaller than or equal to the infimum of $A_{]-\infty,0[}$ and moreover every $p_x$ with $x\in A_{[0,+\infty[}$ has the essential supremum smaller than or equal to the essential infimum of every $p_{x'}$ with $x'\in A_{]-\infty,0[}$. Let $m\in \R$ be between $\sup_{x \in A_{[0,+\infty[}}[\sup\mathrm{Support}(p_x)]$ and $\inf_{x'\in A_{]-\infty,0[}}[\inf\mathrm{Support}(p_{x'})]$. Integration of $x\mapsto (x-m)_+$ provides a contradiction to $\mu\leqcs \nu$. Indeed, one the one hand, it holds
\[
\int [x-m]_+ \,d\mu(x) \geq \int [x-m]_+ \mathbf{1}_{A_{]-\infty,0[}}(x)\,d\mu(x)
\]
(with actually equality).
On the other hand,
\begin{align*}
\int [y-m]_+ \,d\nu(y) &= \int \left(\int [y-m]_+ \,dp_x(y)\right) \,d\mu(x)\\
& \overset{(i)}{=} \int \left(\int [y-m]_+ \,dp_x(y)\right)\mathbf{1}_{A_{]-\infty,0[}}(x) \,d\mu(x)\\
& \overset{(ii)}{=} \int \left(\int y-m \,dp_x(y)\right)\mathbf{1}_{A_{]-\infty,0[}}(x) \,d\mu(x)\\
& \overset{(iii)}{<} \int \left(x-m \right)\mathbf{1}_{A_{]-\infty,0[}}(x) \,d\mu(x)\\
& \overset{(iv)}{=} \int [x-m]_+ \mathbf{1}_{A_{]-\infty,0[}}(x) \,d\mu(x),
\end{align*}
where 
\begin{itemize}
\item $(i)$ comes from the fact that for any $x \in A_{[0,+\infty[}$, $\mathrm{Support}(p_x) \subset ]-\infty,m]$,
\item $(ii)$ comes from the fact that for any $x \in A_{]-\infty,0[}$, $\mathrm{Support}(p_x) \subset [m,+\infty[$,
\item $(iii)$ comes from the definition of $A_{]-\infty,0[}$ and the fact that if $f<g$ on a set $A$ such that $\mu(A)>0$, then $\int_A f\,d\mu < \int_A g\,d\mu$,
\item $(iv)$ follows from the inequality $x > \int y\,dp_x(y) \geq m$, for all $x \in A_{]-\infty,0[}.$
\end{itemize}
\end{itemize}

In any case there is a contradiction to the fact that $\mu(A_{]-\infty,0[})>0$ and this completes the proof.
%
\end{proof}

\begin{rem}\ 
\begin{enumerate}
\item Optimal transport plans for the weak transport problem are not uniquely determined. As stated in Theorem \ref{main-result}, $\bar{\mu}$ and the transport from $\mu$ to $\bar{\mu}$ are uniquely determined but the martingale transport plan from $\bar{\mu}$ to $\nu$ is completely free. It is precisely the goal of Martingale Optimal Transport to determine special martingale transport plans from $\bar{\mu}$ to $\nu$.
\item This remark holds in particular for the dimension 1. Namely, in Theorem \ref{csto} we did not defined a submartingale coupling but only the non-decreasing/increasing part of its Doob decomposition. In relation with this decomposition, the paper \cite{NS} proposes two supermartingale transport problems and describes their solutions, extending the theory on curtain couplings initiated in \cite{BJ}. In particular the two optimal supermartingale couplings respectively corresponding to the two problems coincide with the curtain coupling when $\mu\leqc \nu$. However, transforming the supermartingale problem in submartingale problem, we stress that $\bar{X}=\E[Y|X]$ and its coupling with $X$ is different from ours as we show in this example: if $X$ is the uniform measure on $[-1,0]$ and $Y$ is uniform on $[0,3]$, our coupling is $\E[Y|X]=\bar{X}=X+2$ (it is a translation see Section \ref{sec:Simplex}). The first supermartingale coupling by Nutz and Stebegg (reversed in order to be a submartingale) gives $Y=\E[Y|X]=3(X+1)$. The second one provides $Y=\E[Y|X]=-3X$.
\end{enumerate}
\end{rem}

\section{Example of $\nu$ concentrated on the vertices of a simplex}\label{sec:Simplex}

In this section we prove the following theorem, mainly by geometric means.

\begin{thm}
Let $\mu$ be a compactly supported probability measure on $\R^d$ and $\nu$ an atomic measure whose support is a simplex $\{y_0,\ldots, y_k\}$ of $\R^d$ with $k\leq d$. Denote the convex hull of $\{y_0,\ldots, y_k\}$ by $\Delta$. Then there exists $v\in \R^d$ such that the map $T$ defined by
\[T: \R^d \to \R^d : x\mapsto T(x)=\mathrm{proj}_\Delta(x+v)\]
is such that $\bar{\mu}=T_\#\mu$ (with the notations of Proposition \ref{prop:projection} and Theorem \ref{main-result}), where $\mathrm{proj}_\Delta$ denotes the orthogonal projection on the closed convex set $\Delta.$ 
\end{thm}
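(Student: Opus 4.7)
\emph{Proof proposal.} The strategy is to apply Theorem \ref{main-result} after identifying the convex-order ball $B_\nu$ explicitly and then exhibiting an optimal candidate of the claimed form. First I would show
\[
B_\nu = \bigl\{\eta \in \mathcal{P}_1(\R^d) : \mathrm{supp}(\eta)\subset \Delta \text{ and } \int y\,d\eta(y) = m\bigr\},
\]
where $m := \int y\,d\nu(y)$. The inclusion ``$\subset$'' follows by testing the convex order against the convex function $y\mapsto d(y,\Delta)$ (which vanishes on $\nu$) and against affine functions together with their opposites. The converse uses crucially that $\{y_0,\ldots,y_k\}$ is a \emph{simplex}: every $y\in\Delta$ has unique barycentric coordinates $(\lambda_i(y))_i$, and the affine independence of the $y_i$'s turns the barycenter equality into $\int \lambda_i\,d\eta = \nu(\{y_i\})$ for each $i$, after which Jensen's inequality yields $\int f\,d\eta\leq \sum_i\nu(\{y_i\})f(y_i) = \int f\,d\nu$ for every convex $f$. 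Proposition \ref{prop:projection} then says $\bar\mu$ is the unique $W_2$-projection of $\mu$ onto this explicit set.

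For each $v\in\R^d$ I would set $T_v(x):=\mathrm{proj}_\Delta(x+v)$. Since the metric projection onto $\Delta$ is the gradient of the $\mathcal{C}^1$ convex function $g(z):=\frac{1}{2}(|z|^2 - d(z,\Delta)^2)$ and is $1$-Lipschitz (both standard), $T_v = \nabla \varphi_v$ with $\varphi_v(x):=g(x+v)$ fits the structure in Theorem \ref{main-result}(b). The key technical step is producing $v_0$ such that $F(v_0) = m$, where $F(v):=\int T_v\,d\mu \in \Delta$ is the gradient of $\Phi(v):=\int g(x+v)\,d\mu(x)$. After discarding vertices with zero $\nu$-mass one may assume $m$ lies in the relative interior of $\Delta$; and since $v\mapsto T_v$ is invariant under translations of $v$ orthogonal to the affine hull of $\Delta$, the search may be restricted to the linear subspace $V$ parallel to $\mathrm{aff}(\Delta)$. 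The Legendre identity $g(z) = \sup_{y\in\Delta}[y\cdot z -\frac{1}{2}|y|^2]$ yields the lower bound $\Phi(v) - v\cdot m \geq (y_\ast - m)\cdot v + C(y_\ast)$ for every $y_\ast\in\Delta$, and $m\in\mathrm{relint}(\Delta)$ provides a uniform positive lower bound on $\sup_{y_\ast\in\Delta}(y_\ast - m)\cdot u$ as $u$ ranges over unit vectors of $V$, whence $\Phi - m\cdot(\cdot)$ is coercive on $V$ and attains its minimum at some $v_0\in V$.

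It remains to verify that $\eta_0:=T_{v_0\,\#}\mu$ is the projection $\bar\mu$. By construction $\eta_0\in B_\nu$. For any competitor $\eta\in B_\nu$, fix a quadratically optimal coupling $(X,Z)$ with $X\sim\mu$ and $Z\sim\eta$. Since $Z\in\Delta$ almost surely, the defining inequality of the metric projection gives $|X+v_0 - T_{v_0}(X)|^2 \leq |X+v_0 - Z|^2$ pointwise; expanding, integrating, and using the crucial identity $\E[T_{v_0}(X)] = F(v_0) = m = \E[Z]$ to cancel the $v_0$-terms yields $\int|x - T_{v_0}(x)|^2\,d\mu(x) \leq \E|X-Z|^2 = W_2^2(\eta,\mu)$. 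Because $T_{v_0}$ is the gradient of a convex function, Brenier's theorem identifies the left-hand side with $W_2^2(\eta_0,\mu)$, so $\eta_0$ minimizes $W_2^2(\cdot,\mu)$ on $B_\nu$ and Proposition \ref{prop:projection} gives $\bar\mu = \eta_0 = T_{v_0\,\#}\mu$. The main obstacle is the coercivity step establishing the existence of $v_0$, which is delicate when $k<d$ or some vertices carry no $\nu$-mass; the other structural ingredients are standard facts about the metric projection onto a closed convex set.
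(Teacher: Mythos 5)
Your proposal is correct, and it diverges from the paper's argument at the one genuinely delicate point: the existence of the translation vector $v$. The paper proves the same four structural facts you do (the identification of $B_\nu$ as the measures supported on $\Delta$ with barycenter $m$, via the uniqueness of barycentric coordinates; the optimality of $\mathrm{proj}_\Delta$ among transports onto $\Delta$; the behaviour of $W_2^2$ under translation of $\mu$), but it establishes the existence of $v$ with $\int \mathrm{proj}_\Delta(x+v)\,d\mu(x)=m$ by topological degree theory: it builds a homotopy from the identity to $v\mapsto \int \mathrm{proj}_\Delta\,d\mu_v$ and needs a separate technical lemma showing that for $R$ large the image of the sphere $\mathcal{S}(0,R)$ under the whole homotopy stays in $\partial\Delta$, so that the degree at $y_\nu=m$ is preserved. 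Your variational route — observing that $F(v)=\int \mathrm{proj}_\Delta(x+v)\,d\mu(x)$ is the gradient of the convex potential $\Phi(v)=\int g(x+v)\,d\mu(x)$ with $g=\tfrac12(|\cdot|^2-d(\cdot,\Delta)^2)$, reducing to the subspace $V$ parallel to $\mathrm{aff}(\Delta)$, and getting coercivity of $\Phi-m\cdot(\cdot)$ on $V$ from $m\in\mathrm{relint}(\Delta)$ via the support-function bound — is more elementary and avoids degree theory entirely; the first-order condition $P_V(F(v_0)-m)=0$ combined with $F(v_0)-m\in V$ then gives $F(v_0)=m$. (Note that the hypothesis already forces every vertex to carry positive $\nu$-mass, so $m\in\mathrm{relint}(\Delta)$ holds without discarding anything.) Your final optimality check, cancelling the $v_0$-cross-terms against the barycenter identity, is an equivalent repackaging of the paper's translation identity for $W_2^2$; and you do not even need Brenier there, since $(\Id\times T_{v_0})_\#\mu$ being a coupling already gives $W_2^2(\eta_0,\mu)\leq\int|x-T_{v_0}(x)|^2\,d\mu(x)$. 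What the paper's degree argument buys is robustness (it uses nothing about $F$ beyond continuity and boundary behaviour); what yours buys is a shorter, purely convex-analytic proof exploiting that $F$ is a gradient field.
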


\begin{rem}\ 
\begin{itemize}
\item[-] To be consistent with Theorem \ref{main-result}, note that the map $T$ given above can be written as $T = \nabla \varphi$, where $\varphi:\R^d \to \R$ is the convex function of class $\mathcal{C}^1$ defined by
\[
\varphi(x) = \frac{|x+v|^2}{2} - \frac{1}{2}d(x+v, \Delta)^2,\qquad x \in \R^d,
\]
where $d(z,\Delta) = \inf_{y \in \Delta} |z-y|$.
\item[-] The assumption that $\mu$ is compactly supported could be easily relaxed into the assumption that $\mu$ admits a moment of order $2$ finite.
\end{itemize}
\end{rem}

\proof
Let $\mu$ be a probability with compact support and $\nu$ an atomic measure with support the simplex $\{y_0,\ldots,y_k\}$ of $\R^d$ with $k\leq d$. We denote by $\bar{\mu}$ the projection of $\mu$ on $B_\nu= \{\eta \in \mathcal{P}(\R^d) : \eta \leqc \nu\}$ given by Proposition \ref{prop:projection} and Theorem \ref{main-result}. The assumption that the support of $\nu$ is a simplex is in order for the following property: for any point $y$ in the convex hull $\Delta=\mathrm{Conv}(y_0,\ldots,y_k)$ the barycentric coordinates $(\lambda_0,\ldots,\lambda_k)$, defined by $\sum_{i=0}^k \lambda_i=1$ and $\sum_{i=0}^k \lambda_i y_i=y$, are uniquely determined. Moreover, notice that all the coordinates are non-negative. If $v \in \R^d$, we will denote by $\mu_v$ the translation of $\mu$ by the vector $v$, \textit{i.e} $\mu_v = \mathrm{Law}(X+v)$, with $X \sim \mu.$ The four following properties will permit us to determine $\bar{\mu}$ and find the optimal coupling between $\mu$ and $\bar{\mu}$.

\begin{itemize}
\item[(a)]  \label{pt:trans} Let $\eta$ be a probability measure with a finite moment of order $2$. A coupling $(X,Y)$ is an optimal coupling for $W_2$ between $\mu$ and $\eta$ if and only if $(X+v,Y)$ is an optimal coupling for $W_2$  between $\mu_v$ and $\eta$.
Moreover,
\[
W_2^2(\mu_v,\eta)=W_2^2(\mu,\eta)+|v|^2+2\left\langle \int x\,d\mu-\int x\,d\eta, v\right\rangle,
\]
\item[(b)] \label{pt:ordre} As $\bar{\mu}\leqc \nu$ the measure $\bar{\mu}$ is concentrated on $\Delta$ and has the same barycenter as $\nu$. Conversely any measure $\eta$ concentrated on $\Delta$ and with the same barycenter as $\nu$ satisfies $\eta\leqc \nu$.
\item[(c)] \label{pt:proj} Among the measures concentrated on $\Delta$, the one that minimises the quadratic cost $W_2$ with respect to a given probability measure $\mu'$ having a finite moment of order $2$ is $(\mathrm{proj}_\Delta)_\#\mu'$.
\item[(d)] \label{pt:topo} There exists $v\in \R^d$ such that $(\mathrm{proj}_\Delta)_\#\mu_v$ has the same barycenter as $\nu$.
\end{itemize}

Before we prove the different four points, let us finish the proof. According to Item (b), all the elements of $B_\nu$ have the same barycenter as $\nu$. Therefore, applying Item (a), we have for any $\eta \in B_\nu$
\[
W_2^2(\mu_v,\eta)=W_2^2(\mu,\eta)+|v|^2+2\left\langle \int x\,d\mu-\int x\,d\nu, v\right\rangle.\]
One easily concludes from this identity that $\overline{\mu_v} = \bar{\mu}$ for any $v\in \R^d.$ According to Item (d), there exists $v \in \R^d$ such that $\eta_v:=(\mathrm{proj}_\Delta)_\#\mu_v$ has the same barycenter as $\nu$. Since $\eta_v$ is also concentrated on $\Delta$, it follows from Item (b) that $\eta_v$ belongs to $B_\nu$. Therefore, according to Item (c), 
\[
\inf_{\eta \in B_\nu}W_2^2(\mu_v,\eta) \geq \inf_{\eta(\Delta)=1}W_2^2(\mu_v,\eta) = W_2^2(\mu_v,\eta_v)
\]
and so $\eta_v = \overline{\mu_v} = \bar{\mu}.$ Finally if $X \sim \mu$, it follows from Item (a) that $(X,\mathrm{proj}_\Delta (X+v))$ is an optimal coupling between $\mu$ and $\bar{\mu}.$


The four points above can be proved as follows:
\begin{itemize}
\item[(a)] This assumption simply comes from
\[\E[|(X+v)-\bar{X}|^2]=\E[|X-\bar{X}|^2]+\underbrace{|v|^2+2\langle \E[X-\bar{X}], v\rangle}_{\text{Depends only on }\mathrm{Law}(X),\,\mathrm{Law}(\bar{X})}.\]
\item[(b)] The first implication is obvious. For the second implication, assume $\eta$ is concentrated on $\Delta$ and has the same barycenter as $\nu$. For every $x\in \Delta$, let $p_x$ be the unique probability measure concentrated on $\{y_0,\ldots, y_k\}$ with barycenter $x$ and let $\nu'=\int p_x\,d\eta(x)$. The probability measure $\nu'$ is concentrated on the same set as $\nu$. Its barycenter permits to determine it uniquely. The barycenter of $\nu'$ is $\int x\,d\eta(x)$, the barycenter of $\eta$ that is also the one of $\nu$. Therefore $\nu'=\nu$. We have proved that there exists a martingale having $\eta$ on $\nu$ as marginals. Therefore $\eta\leqc \nu$.
\item[(c)] For every $x\in \R^d$, the point $\mathrm{proj}_\Delta(x)$ is by definition the closest point in $\Delta$. So if $X' \sim \mu'$ and $Y$ takes values in $\Delta$ almost surely, one gets \[
\E[|X'-Y|^2] \geq \E[|X' - \mathrm{proj}_\Delta(X')|^2]
\]
and so $W_2^2(\mu',\eta) \geq \E[|X' - \mathrm{proj}_\Delta(X')|^2]$ for any $\eta$ concentrated in $\Delta$. In particular, $W_2^2(\mu', (\mathrm{proj}_\Delta)_\#\mu') = \E[|X' - \mathrm{proj}_\Delta(X')|^2]$ which proves the claim.

\item[(d)] Without loss of generality we can assume $k=d$ for the following reason: if $\Delta$ has positive codimension, the measure $(\mathrm{proj}_\Delta)_\#\mu'$ is exactly $(\mathrm{proj}_\Delta)_\#\circ(\mathrm{proj}_A)_\#\mu'$ where $A$ is the affine space spanned by $\Delta$. As a consequence, in our investigation we can replace $\mu$ by $(\mathrm{proj}_A)_\#\mu$, consider only translations in $A$ and see $\Delta$ as a simplex with full dimension.

Now, let $y_\nu$ be the barycenter of $\nu$ and let us prove the existence of $v \in \R^d$ such that the barycenter of  $(\mathrm{proj}_\Delta)_\#\mu_v$ is equal to $y_\nu$. This point can be proved using the notion of topological degree coming from algebraic topology (we refer to \cite[Chapter IV ]{OR09} for an introduction). 
If $\Omega$ is a bounded open set of $\R^d$ and $f:\overline{\Omega} \to \R^d$ is a continuous mapping, the degree of $f$ is a $\mathbb{Z}$-valued quantity denoted by $\dg(f,\Omega,a)$ defined for every $a \in \R^d \setminus f(\partial \Omega)$. We will use the following basic properties:
\begin{itemize}
\item if $f = \mathrm{Id}$, then $\dg(\mathrm{Id},\Omega,a) = 1$, for all $a \in \Omega$;
\item if $\dg(f,\Omega,a) \neq 0$, then the equation $f(x) = a$, $x \in \Omega$, admits at least one solution;
\item if $F: \overline{\Omega}\times [0,1] \to \R^d$ is a continuous function and $a \in \R^d$ is such that for all $t \in [0,1]$, $a \in \R^d \setminus F(t,\partial \Omega)$, then $\dg(F(\,\cdot\,,0),\Omega,a)) = \dg(F(\,\cdot\,,1),\Omega,a))$ (invariance by homotopy).
\end{itemize}
Consider $X$ a random variable with law $\mu$ and $R>0$ a positive number to be fixed later such that the open ball of center $0$ and radius $R$ denoted by $\B(0,R)$ contains $\Delta$. From now on the sphere of center $0$ and radius $R$ will be denoted by $\mathcal{S}(0,R)$. We are interested in the map
\begin{align}\label{eq:Phi}
\Phi:\overline{\B}(0,R)\times [0,1]\to \Delta\subset \R^d,
\end{align}
defined so that $\Phi(v,t)$ is the barycenter of $(\mathrm{proj}_\Delta)_\#(\mathrm{Law}(v+tX))$.
Note that this map is continuous so that it can be seen as an homotopy between $\mathrm{proj}_\Delta$ and $v\mapsto \int y\,d(\mathrm{proj}_\Delta)_\#\mu_v(y).$ Another homotopy is possible between $\Id$ and $\mathrm{proj}_\Delta$
\[\Psi:\overline{\B}(0,R)\times [0,1]\to \R^d\]
defined by $\Psi(x,t)=(1-t)x+t\mathrm{proj}_\Delta(x)$. Note that since $y_\nu$ lies in the interior of $\Delta$, it does not belong to $\Psi(\mathcal{S}(0,R)\times [0,1])$. Therefore, by invariance by homotopy, 
\[
1= \dg(\mathrm{Id},\B(0,R),y_\nu)=\dg(\Psi(\cdot,0),\B(0,R),y_\nu)=\dg(\Psi(\cdot,1),\B(0,R),y_\nu),
\]
from which we infer that $\dg(\Phi(\cdot,0),\B(0,R),y_\nu)=1$. According to Lemma \ref{lem:image} below, if $R$ is large enough then $\Phi(\mathcal{S}(0,R)\times [0,1]) \subset \partial \Delta$. Since $y_\nu$ lies in the interior of $\Delta$, we conclude that $\Phi(\mathcal{S}(0,R)\times [0,1])$ does not meet $y_\nu$, and so using the homotopy invariance again, we get that $\dg(\Phi(\cdot,1),\B(0,R),y_\nu)=1$ and so the equation $\Phi(v,1)=\int \mathrm{proj}_\Delta(x)\,d\mu_v(x) = y_\nu$ admits at least one solution $v \in \B(0,R)$ which completes the proof.\endproof
\end{itemize}

\begin{lem}\label{lem:image} Under the preceding assumptions,
if $R$ is large enough, the map $\Phi$ defined in \eqref{eq:Phi} is such that $\Phi(\mathcal{S}(0,R)\times [0,1])\subset\partial \Delta$.

\end{lem}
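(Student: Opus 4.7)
The plan is to exhibit, for each $v$ with $|v|$ large, a single $(d-1)$-face $F_v$ of $\Delta$ containing all the projections $\mathrm{proj}_\Delta(v+tx)$ for $t \in [0,1]$ and $x$ in the support of $\mu$; since $F_v$ is convex and lies in $\partial\Delta$, the barycenter $\Phi(v,t)$ is then automatically in $\partial\Delta$. Fix $M>0$ such that $|x|\le M$ for every $x$ in the support of $\mu$, and $|y|\le M$ for every $y \in \Delta$. Given $v$ with $|v|=R$ large, set $u^* = v/R$ and let $y_{j^*}$ be any vertex of $\Delta$ realizing $\min_i\langle y_i, u^*\rangle$; define $F_v$ to be the $(d-1)$-face of $\Delta$ opposite $y_{j^*}$. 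Choosing the \emph{minimizing} vertex, rather than one suggested by normal-alignment or by the support face $F^*(u^*):=\arg\max_{y\in\Delta}\langle y,u^*\rangle$, is what will make the argument go through uniformly.

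The key geometric input is the width $w(u) := \max_i\langle y_i,u\rangle - \min_i\langle y_i,u\rangle$ of $\Delta$. Since $\Delta$ has nonempty interior (as $k = d$), $w$ is strictly positive on the unit sphere, and by continuity and compactness $w_0 := \inf_{u \in \mathcal{S}^{d-1}} w(u) > 0$. Letting $y_{i^*}$ realize $\max_i\langle y_i, u^*\rangle$, we have $y_{i^*} \neq y_{j^*}$ and $\langle u^*, y_{i^*} - y_{j^*}\rangle = w(u^*) \ge w_0$.

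The core of the proof is an exchange argument on the projection. For $z = v + tx$, I would write $y := \mathrm{proj}_\Delta(z) = \sum_i \lambda_i y_i$ in barycentric coordinates and suppose towards contradiction that $\lambda_{j^*} > 0$. Then $y' := y + \eps(y_{i^*} - y_{j^*})$ lies in $\Delta$ for every $\eps \in (0, \lambda_{j^*}]$, and
\[
|z - y'|^2 - |z - y|^2 = -2\eps\,\langle z - y, y_{i^*} - y_{j^*}\rangle + \eps^2\,|y_{i^*} - y_{j^*}|^2.
\]
Using $\langle z, y_{i^*} - y_{j^*}\rangle \ge R\,w_0 - 2M^2$ and $|\langle y, y_{i^*} - y_{j^*}\rangle| \le 2M^2$, one finds $\langle z - y, y_{i^*} - y_{j^*}\rangle \ge R\,w_0 - 4M^2$, which is strictly positive as soon as $R > 4M^2/w_0$. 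For $\eps$ small this forces $|z - y'| < |z - y|$, contradicting the optimality of $y$. Hence $\lambda_{j^*} = 0$, i.e.\ $\mathrm{proj}_\Delta(z) \in F_v$, and the conclusion follows.

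The main obstacle I anticipate is identifying a face $F_v$ that contains \emph{all} the projections simultaneously: seemingly natural choices -- for instance, the $(d-1)$-face whose outward normal is most aligned with $u^*$, or the smallest $(d-1)$-face containing $F^*(u^*)$ -- can fail because projections of nearby points may straddle several adjacent faces of $\partial\Delta$, and the corresponding uniform bounds break down near transitions of the normal fan of $\Delta$. Selecting the face \emph{opposite the minimizing vertex} bypasses these difficulties and reduces the required uniform estimate to the strict positivity of $w_0$, which is immediate from $\Delta$ being full-dimensional.
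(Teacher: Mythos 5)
Your proof is correct, and it establishes exactly the reduction that the paper also uses — namely that for each $v$ on a large enough sphere there is a single facet $\Delta_{\{j\}}$ containing $\mathrm{proj}_\Delta(v+tx)$ for all $t\in[0,1]$ and all $x$ in the (bounded) support of $\mu$, after which convexity and closedness of the facet place the barycenter $\Phi(v,t)$ in $\partial\Delta$. Where you genuinely diverge from the paper is in how this key containment is proved. The paper identifies the facet indirectly, via the normalized displacement map $g(x)=(x-\mathrm{proj}_\Delta(x))/|x-\mathrm{proj}_\Delta(x)|$ and the normal cones $C_J$, and runs a soft compactness argument (sequences $R_n\to\infty$, Hausdorff convergence of $g(\mathcal{B}(x_n,r))$ to a single direction $u_\infty$, local Lipschitz decay of $g$ at infinity) to reach a contradiction; no explicit threshold for $R$ comes out. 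You instead name the facet explicitly — the one opposite the vertex minimizing $\langle y_i, v/|v|\rangle$ — and prove the containment by a first-variation exchange in barycentric coordinates: if $\lambda_{j^*}>0$, shifting mass from $y_{j^*}$ to $y_{i^*}$ strictly decreases the distance to $z=v+tx$ once $\langle z-y, y_{i^*}-y_{j^*}\rangle>0$, which your width estimate guarantees uniformly for $R>4M^2/w_0$. All the steps check out: the perturbed point stays in $\Delta$ because the simplex's barycentric coordinates are nonnegative and sum to one, $i^*\neq j^*$ because $w(u^*)\geq w_0>0$, and $w_0>0$ follows from full-dimensionality (you correctly invoke the reduction to $k=d$ made just before the lemma is applied). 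Your version is more elementary and fully quantitative, producing an explicit admissible radius, at the modest cost of being tailored to the simplex (unique barycentric coordinates); the paper's cone argument is softer but closer in spirit to what one would do for a general convex body.
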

\begin{proof}
Recall that 
\[\Delta=\left\{\sum_{i=0}^d \lambda_i y_i:\,\sum_{i=0}^d \lambda_i=1,\,\lambda_0,\ldots,\lambda_d\geq 0\right\}.\]
For $J$ a subset of $\{0,\ldots,d\}$ we denote by $\Delta_J$ the set
\[\Delta_J=\left\{\sum_{i=0}^d \lambda_i y_i:\,\forall i\in J,\,\lambda_i=0\right\} \subset \Delta
.\]
We denote by $u_j$ the outward normal unit vector of the facet $\Delta_{\{j\}}$. Recall that the relative interior of a subset is the interior of this set in the topology induced by its affine span. Every point of $\R^d$ whose projection belongs to the relative interior of $\Delta_J$ can be written in the form $\sum_{i\notin J}\lambda_i y_i+\sum_{i\in J} \xi_i u_i$ where the coefficients $(\lambda_i)_{i\notin J}$ are positive and satisfy $\sum_{i\notin J} \lambda_i=1$ and the coefficients $(\xi_i)_{i\in J}$ are non-negative.

We prove now that for every $r>0$, there exists $R_0>0$ such that for all $R\geq R_0$ it holds \begin{itemize}
\item[-] $\Delta$ is contained in $\mathcal{B}(0,R-r+1)$,
\item[-] for every $v\in \mathcal{S}(0,R)$ there exists $j\in \{0,\ldots,d\}$ such that $\mathrm{proj}_\Delta(\mathcal{B}(v,r))\subset \Delta_{\{j\}}$.
\end{itemize}
Therefore if $X \sim \mu$ with $\mu$ such that $\mu(\mathcal{B}(0,r))=1$ and $v \in \mathcal{S}(0,R)$, there exists $j$ such that the barycenter of $(\mathrm{proj}_\Delta)_\#(\mathrm{Law}(v+tX))$ is in $\Delta_{\{j\}}\subset \partial \Delta$, which proves the claim.

Let us consider
\[g:x\in\R^d\setminus \Delta\mapsto (x-\mathrm{proj}_\Delta(x))/|x-\mathrm{proj}_\Delta(x)|\in \mathbb{S}^{d-1}.\]
For our proof it is enough to find $R$ such that for every $x\in \mathcal{S}(0,R)$ the range $g(\mathcal{B}(x,r))$ only contains vectors of $\mathbb{S}^{d-1}$ that are in the cones $(C_J)_{J\ni j,\,\#J\leq d}$ for some $j$, where 
\[C_J=\left\{u\in \R^d:\,u=\sum_{i\in J}\xi_i u_i,\,\xi_i\geq 0,\,\forall i\in J\right\}.\]
In particular it does not intersect $C_{\{0,\ldots,d\}\setminus\{j\}}$. Striving for a contradiction we assume that there exists an increasing sequence $R_n\to \infty$ and $x_n\in \mathcal{S}(0,R_n)$ such that the property is not satisfied. Therefore, for every $n$ the sets $g(\mathcal{B}(x_n,r))\subset \mathbb{S}^{d-1}$ intersects for every $j$ a cone $C_J$ with $j\in J$. However, the diameter of $g(\mathcal{B}(x_n,r))$ tends to zero. Up to selecting a subsequence, the sequence converges in the Hausdorff topology to a compact set of diameter zero, i.e a point $\{u_\infty\}\subset \mathbb{S}^{d-1}$. We have $u_\infty\in C_{J_\infty}$ for some $J_\infty\subset\{0,\ldots,d\}$. Let $j_\infty$ be in $J_\infty$. We have also $g(x_n)\to u_\infty$ (up to a subsequence) and the fact that $g$ is locally Lipschitz with a constant that tends to zero at infinity tells us that the $j_\infty$ coordinate is not zero in the cone coordinate $u=\sum_{j}\xi_j u_j$ for all the points  $u$ of $g(\mathcal{B}(x_n,r))\subset \mathbb{S}^{d-1}$ when $n$ is great enough - a contradiction.
\end{proof}

\section{Other examples and discussion of the literature}\label{sec:examples}
In this section, we briefly present and discuss other examples of optimal transport problems involving weak cost functions.
\begin{enumerate}
\item In \cite[Theorem 1.5]{GRSST15}, it is shown that if $\mu,\nu$ are probability measures on $\R$ having finite first moments and if $\bar{\mu}\in B_\nu$ denotes the projection of $\mu$ on $B_\nu$ as defined in Proposition \ref{prop:projection}, then for any even convex cost function $\theta : \R \to \R_+$, it holds
\[
\overline{\mathcal{T}}_\theta(\nu|\mu) = \inf_{\pi \in C(\mu,\bar{\mu})} \iint \theta(y-x)\,d\pi(x,y),
\]
where we recall that the barycentric optimal transport cost $\overline{\mathcal{T}}_\theta(\nu|\mu)$ is defined by \eqref{eq:barcost}. This result has been recently recovered and completed by Alfonsi, Corbetta and Jourdain in \cite{ACJ}, where an explicit expression is given for $\bar{\mu}$ (see \cite[Proposition 3.4]{ACJ}).
\item In the recent paper \cite{ABC18} (see in particular the section 5.2 of \cite{ABC18}), Alibert, Bouchitté and Champion consider the family of cost functions $(c_\lambda)_{\lambda \geq0}$ defined for all $x \in \R^d$ and $p\in \mathcal{P}(\R^d)$ having a finite first moment by
\[
c_\lambda(x,p) = (\lambda-1)\int |y-x|^2\,dp(y) + \left|\int y\,dp(y)-x\right|^2
\]
and study the associated optimal transport problem:
\[
\overline{\mathcal{T}}_{c_\lambda}(\nu |\mu) = \inf_{\pi \in C(\mu,\nu)} \int c_\lambda(x,p_x)\,d\mu(x)
\]
(this quantity is denoted by $F_\lambda(\mu,\nu)$ in \cite{ABC18}).
 It turns out that our Theorem \ref{main-result} yields a full description of optimal couplings for these costs for $\lambda>0$ (which completes the somehow implicit characterization of \cite[Theorem 5.6]{ABC18}). Namely, an easy calculation reveals that if $d\pi(x,y) = d\mu(x)dp_x(y) \in C(\mu,\nu)$ where $\mu,\nu$ are compactly supported, then  
\[
\int c_\lambda(x,p_x)\,d\mu(x) = C(\lambda)+ \int \left|\int y\,dp_{x/\lambda}(y)-x\right|^2\,d\mu_\lambda(x),
\]
where $C(\lambda) = -\lambda(\lambda-1) \int |x|^2\,d\mu(x) + (\lambda-1)\int |y|^2\,d\nu(y)$ and $\mu_\lambda$ is the image of $\mu$ under the map $x\mapsto \lambda x.$ Since the kernel $q_x = p_{x/\lambda}$, $x \in \R^d$, is such that $\nu(\,\cdot\,) = \int q_x(\,\cdot\,)\,d\mu_\lambda(x)$, we conclude that
\[
\overline{\mathcal{T}}_{c_\lambda}(\nu |\mu) = C(\lambda) + \overline{\mathcal{T}}_2(\nu |\mu_\lambda),
\]
and $p$ is optimal in $\overline{\mathcal{T}}_{c_\lambda}(\nu |\mu)$ if and only if $q$ is optimal in $\overline{\mathcal{T}}_2(\nu |\mu_\lambda)$. Moreover, Theorem \ref{main-result} yields that 
optimal kernels $p$ are all of the form
\[
p_x = r_{\nabla \varphi_\lambda (\lambda x)},\quad \text{with } r \text{ such that } \int y \,dr_u(y) = u,  \text{ for } \overline{\mu_\lambda} \text{ almost all } u \in \R^d,
\]
where $\varphi_\lambda$ is the $\mathcal{C}^1$ smooth convex function associated to the transport from $\mu_\lambda$ to the projection $\overline{\mu_\lambda}$ of $\mu_\lambda$ on $B_\nu.$
In terms of random vectors, $\pi_\lambda=\mathrm{Law}(X_\lambda,Y_\lambda) \in C(\mu,\nu)$ is optimal for $c_\lambda$ if and only if $\E[Y_\lambda | X_\lambda] = \nabla \varphi_\lambda (\lambda X_\lambda)$ (which has distribution $\overline{\mu_\lambda}$).
\item The case $\lambda =0$ is also interesting since, in this case, 
\[
\overline{\mathcal{T}}_{c_0} (\nu|\mu) = - \sup_{\pi \in C(\mu,\nu)}\int \mathrm{Var}(p_x)\,d\mu(x),
\]
where as usual $d\pi(x,y) = d\mu(x)dp_x(y)$ and for all $p \in\mathcal{P}(\R^d)$ having finite first moment, $\mathrm{Var}(p) = \int |y|^2\,dp(y) - \left| \int y\,dp(y)\right|^2$ (note that this is a concave function of $p$).
In this case, as observed in \cite{ABC18}, the unique optimal coupling $\pi^\circ$ is the product one : $\pi^\circ=\mu\otimes \nu.$

\item As proved in  \cite[Proposition 5.2]{BJ2}, the so-called \emph{shadow couplings} are solutions to a weak optimal transport problem. Shadow couplings from $\mu$ to $\nu$ are martingale transport parametrised by a measure $\hat \mu$ with marginals $\lambda$ and $\mu$. Let $(\lambda_x)_{x\in [0,1]}$ be a disintegration of $\hat \mu$ with respect to $\mu$. Then
\[
c^{\hat\mu}(x,p_x)=\inf \int (1-u)\sqrt{1+y^2}\,d\alpha(u,y)
\]
where the $\inf$ goes among all $\alpha$ with first marginal $\lambda_x$ and second marginal $p_x$ and such that we have $\int f(u)(y-x)\,d\alpha(u,y)=0$ for every bounded $f$. 
\item Last but not least, let us mention that after the completion of this work, we learned that a recent remarkable economics paper on optimal mechanisms for the multiple-good monopoly problem shows interesting similarities with our context. In \cite{DDT17}, Daskalakis, Deckelbaum and Tzamos study the maximization problem among convex coordinate-wise nondecreasing $1$-Lipschitz potential functions defined on some $d$-dimensional rectangle. In their Theorem 2, they prove the strong duality between this problem and ``strong'' dual problem (for us a primal problem). This problem is a transport problem with $\ell_1$-norm as cost function and the possibility to replace $\mu$ and $\nu$ by $\mu'$ and $\nu'$ with $\mu \leqcs \mu'$ and $\nu'\leqcs \nu$.

\end{enumerate}

\section{Proofs}\label{sec:proofs}
This section contains the proofs of Proposition \ref{prop:projection} and Theorem \ref{main-result} and of the technical Lemma \ref{lem:contraction}. 

\subsection{Proof of the existence and uniqueness of the projection}
In what follows, we will equip $\mathcal{P}_2(\R^d)$ with the topology generated by the metric $W_2$. We recall (see \textit{e.g.} \cite[Theorem 7.12]{Vil03}) that if $(\eta_k)_{k\geq 1}$ is a sequence of elements of $\mathcal{P}_2(\R^d)$ and $\eta \in \mathcal{P}_2(\R^d)$, then $W_2(\eta_k,\eta) \to 0$ if and only if $\int f\,d\eta_k \to \int f\,d\eta$ for any continuous function $f$ satisfying $|f|(x) \leq a+ b|x|^2$, $x \in \R^d$, for some $a,b\geq0$. 

\proof[Proof of Proposition \ref{prop:projection}]
Let us show that the set $B_\nu$ is compact in $\mathcal{P}_2(\R^d)$ for the topology induced by $W_2$. 

First, let us show that $B_\nu$ is closed. Namely, $B_\nu$ can be written as follows
\[
B_\nu =\left \{\eta \in \mathcal{P}_2(\R^d) : \int f\,d\eta \leq \int f\,d\nu, \forall f: \R^d \to \R \text{ convex and Lipschitz}\right\}.
\]
(The fact that one can restrict to convex and Lipschitz functions in the definition of the convex order is classical ; one can see this by noting that if $f$ is convex then $f_n:x\mapsto \inf_{y\in \R^d} \{f(y) + n|x-y|\}$, $x\in \R^d$, is a sequence of $n$-Lipschitz convex functions converging to $f$ monotonically.) Since functionals $\eta \mapsto \int f\,d\eta$ with $f$ convex and Lipschitz are continuous for the $W_2$ topology, it follows that $B_\nu$ is closed.

Now let us show that $B_\nu$ is precompact. According to a variant of Prokhorov theorem (see \textit{e.g.} \cite[Theorem 9.10]{GRST17}), this is equivalent to show that 
\[
\sup_{\eta \in B_\nu} \int |x|^2 \mathbf{1}_{|x|>R}\,d\eta(x)  \to 0
\]
as $R$ goes to $\infty.$ Note that
\[
 |x|^2 \mathbf{1}_{|x|>R} \leq \max(0 ; 2|x|-R)^2,\qquad \forall x \in \R^d.
\]
The function on the right hand side being convex, one thus gets
\[
\sup_{\eta \in B_\nu} \int |x|^2 \mathbf{1}_{|x|>R}\,d\eta(x) \leq \int  \max(0 ; 2|x|-R)^2\,d\nu(x) := \varepsilon(R).
\]
The monotone convergence theorem then implies that $\varepsilon(R) \to 0$ as $R \to +\infty$, which shows that $B_\nu$ is precompact.

It follows from what precedes that $B_\nu$ is compact. The map $\eta \mapsto W_2(\eta,\mu)$ therefore reaches its minimum on $B_\nu$.

Now let us prove that the minimizer is unique. This will follow from the strict convexity of $W_2^2(\mu,\,\cdot\,)$ along generalized geodesics with base point $\mu$ and from the convexity of the set $B_\nu$ along those generalized geodesics. More precisely, suppose that $\eta_0,\eta_1$ are in $B_\nu$ and let $(X,Y_0,Y_1)$ be a random vector such that $X\sim \mu$, $Y_0 \sim \eta_0$, $Y_1\sim \eta_1$ and so that $\mathrm{Law} (X,Y_i) \in C(\mu,\eta_i)$  and $W_2^2(\mu,\eta_i) = \E[|X-Y_i|^2]$, $i=0,1.$ Then define $\eta_t = \mathrm{Law} ((1-t) Y_0 + tY_1)$ for all $t \in [0,1]$. First note that $\eta_t \in B_\nu$ for all $t\in [0,1]$. Indeed, if $f$ is a convex function on $\R^d$, it holds
\[
\int f \,d\eta_t = \E[f((1-t) Y_0 + tY_1)] \leq (1-t) \E[f(Y_0)] + t \E[f(Y_1)]  \leq  \int f\,d\nu.
\]
Moreover, according to Lemma 9.2.1 of \cite{AGS08}, it holds
\[
W_2^2 (\mu, \eta_t) \leq (1-t)W_2^2(\mu,\eta_0) + t W_2^2(\mu,\eta_1) -t(1-t) W_2^2(\eta_0,\eta_1)
,\qquad \forall t\in [0,1]
\]
So if $W_2^2(\mu,\eta_0) = W_2^2(\mu,\eta_1) = \inf_{\eta \in B_\nu} W_2^2(\mu,\eta)$, then necessarily $\eta_0=\eta_1$, which shows uniqueness. 

Finally, let us show that $\overline{\mathcal{T}}_2(\nu|\mu) = W_2^2(\bar{\mu},\mu).$ 
First, let $(X,Y)$ be a coupling of $\mu$ and $\nu$ and set $X' = \E[Y|X]$. Since $(X',Y)$ is a martingale, it follows that $\mathrm{Law}(X') \in B_\nu$.  Therefore $\E[|X - X'|^2]\geq W_2^2(\mu,\bar{\mu})$ and so optimizing gives $\overline{\mathcal{T}_2}(\nu|\mu) \geq W_2^2(\mu,\bar{\mu}).$  On the other hand, let $\pi \in C(\mu,\eta)$ be a coupling between $\mu$ and some $\eta \in B_\nu$. Since $\eta \leqc \nu$, one can construct a Markov chain $(X,X',Y)$ such that $(X,X')\sim \pi$ and $(X',Y)$ is a martingale. Then it holds,
\begin{align*}
\E[|X-X'|^2] &= \E[|X - \E[Y|X,X']|^2] \\
& = \E\left[\E[[|X - \E[Y|X,X']|^2 | X]\right]\\
& \geq \E[|X - \E[Y|X]|^2] \geq \overline{\mathcal{T}}_2(\nu|\mu),
\end{align*}
where the inequality comes from Jensen inequality for conditional expectation. Optimizing over $X'$ and $\eta$ gives that $\overline{\mathcal{T}_2}(\nu|\mu) \leq W_2^2(\bar{\mu},\mu)$ and completes the proof. \endproof
\begin{rem}
Note that it is easy to conclude using similar arguments that there always exists a deterministic map $T$ transporting $\mu$ on $\bar{\mu}$ such that $W_2^2(\mu,\bar{\mu}) = \int |x-T(x)|^2\,d\mu(x)$. Indeed, suppose that $(X,\bar{X})$ is an optimal coupling for $W_2^2(\bar{\mu},\mu)$ and consider $X' = \E[\bar{X}|X]$. 
Then, the conditional Jensen inequality gives
\begin{equation}\label{rem:transport}
\E[|X - X'|^2] = \E[|X - \E[\bar{X}|X]|^2] \leq \E[|X-\bar{X}|^2] = W_2^2(\bar{\mu},\mu).
\end{equation}
Let $\eta' := \mathrm{Law}(X')$. Then $\eta' \leqc \bar{\mu}$ and $\bar{\mu} \leqc \nu$ so $\eta' \in B_\nu$. Therefore, $\E[|X-X'|^2] \geq \inf_{\eta \in B_\nu} W_2^2(\eta,\mu) = W_2^2(\bar{\mu},\mu)$ and there is equality in \eqref{rem:transport}. So $X' = \E[\bar{X}|X] \sim \bar{\mu}$ and $(X,X')$ is an optimal coupling. Finally, by definition of conditional expectation, there exists a measurable $T:\R^d\to \R^d$ such that $\E[\bar{X}|X] = T(X)$ almost surely, which proves the claim.
\end{rem}

\subsection{Proof of the main result}

Our proof of Theorem \ref{main-result} follows closely the scheme developed by Gangbo \cite{Gangbo94} in his alternative proof of Brenier Theorem: first we show the dual attainment, and then we obtain the existence of the transport map $\nabla \varphi$ by doing a first variation around the minimizer $f^\circ$.

As explained in Remark \ref{rem:deux_versions}, we will treat the case of probability measures with compact supports and the general case separately. Only Items (a) and (b) of Theorem \ref{main-result} are concerned. The proof of Item (c) being identical in the compact and the general case it is proved only once on page \pageref{proof:c}.

\subsubsection{The compact case}In this section, we assume that $\mu$ and $\nu$ have compact supports. In the following theorem we restate Theorem \ref{main-result} (a) and (b) in this special case. In comparison with Theorem \ref{main-result} (a), note that in the compact case $f^\circ$  is always finite valued and bounded from below.
\begin{thm}\label{thm:compact}
Let $\mu$ and $\nu$ be probability measures with compact support on $\R^d$.
\begin{itemize}
\item[(a)] There exists some convex function $f^\circ:\R^d\to \R$ bounded from below such that
\begin{equation}\label{eq:attainment}
\overline{\mathcal{T}_2}(\nu|\mu) = \int Q_2f^\circ\,d\mu - \int f^\circ\,d\nu,
\end{equation}
where, for any function $g:\R^d\to \R$,
\[
Q_2g(x) = \inf_{y\in \R^d}\{ g(y) + |y-x|^2\},\qquad  x \in \R^d.
\]
If the supports of $\mu$ and $\nu$ are contained in the closed ball of radius $R>0$ centered at $0$ then $f^\circ$ is $4R$-Lipschitz.
\item[(b)] Let $h$ and $\varphi$ be the convex functions defined by 
\[
h(x) = \frac{f^\circ(x)+|x|^2}{2}, \quad x\in \R^d \qquad\text{and}\qquad \varphi(y) = h^*(y),\quad y\in \R^d.
\]
The function $\varphi$ is $\mathcal{C}^1$-smooth on $\R^d$ and the map $\nabla \varphi$ is $1$-Lipschitz on $\R^d$. The projection $\bar{\mu}$ of $\mu$ on $B_\nu$ is such that 
\[
\bar{\mu} = \nabla \varphi_\# \mu.
\]
\end{itemize}
\end{thm}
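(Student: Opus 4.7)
The plan is to follow the scheme devised by Gangbo for his alternative proof of the Brenier theorem: first establish the dual attainment in part (a), then read off the structural properties of $\bar{\mu}$ in part (b) via a first-variation argument around the dual optimizer.

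For part (a), I would take a maximizing sequence $(f_n)$ of convex, Lipschitz, bounded below functions for $J(f) := \int Q_2 f\,d\mu - \int f\,d\nu$. Exploiting the shift invariance $J(f+c)=J(f)$, normalize each $f_n$ (for instance, $\min f_n = 0$). The crucial step is to show that one may further assume, without loss of generality, that the $f_n$ are uniformly $4R$-Lipschitz on $\R^d$. This is where the compactness of the supports is used in an essential way: $J$ depends only on the values of $f$ on a bounded region (since $\nu$ sits in $\overline{B}(0,R)$ and the minimizer in $Q_2 f(x) = \inf_y\{f(y)+|y-x|^2\}$ for $x$ in the support of $\mu$ lies in a bounded region), so one can replace $f_n$ by a convex, $4R$-Lipschitz lower envelope (of Pasch--Hausdorff type) without decreasing $J$. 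Once the sequence is uniformly Lipschitz and uniformly bounded on compacts, Ascoli--Arzel\`a produces a subsequence converging locally uniformly to a convex, $4R$-Lipschitz limit $f^\circ$; the pointwise convergence $Q_2 f_n \to Q_2 f^\circ$ combined with dominated convergence on the compact supports then yields $J(f^\circ) = \overline{\mathcal{T}}_2(\nu|\mu)$. I expect this Lipschitz/boundedness reduction to be the main obstacle of the whole proof.

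For part (b), the structural conclusions follow quickly from $f^\circ$. Set $h = (f^\circ+|x|^2)/2$ and $\varphi = h^*$: since $h - |x|^2/2 = f^\circ/2$ is convex, Lemma \ref{lem:contraction} (implication (a) $\Rightarrow$ (c) with $g=h$) gives that $\varphi$ is $\mathcal{C}^1$ and $\nabla\varphi$ is $1$-Lipschitz. Completing the square,
\[
Q_2 f^\circ(x) = |x|^2 + \inf_y\{2h(y) - 2 y\cdot x\} = |x|^2 - 2 h^*(x) = |x|^2 - 2\varphi(x),
\]
and since $h$ is strongly convex the minimizer in $y$ is the unique point $\nabla\varphi(x)$.

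To identify $\bar{\mu}$ with $\nabla\varphi_\#\mu$, I would first run a first-variation argument: for any convex, Lipschitz, bounded $g$, the function $f^\circ + \varepsilon g$ lies in the admissible dual class for small $\varepsilon \geq 0$, and optimality of $f^\circ$ together with the envelope theorem (valid because the minimizer $\nabla\varphi(x)$ is unique) and dominated convergence produce
\[
\int g(\nabla\varphi(x))\,d\mu(x) - \int g\,d\nu \leq 0,
\]
whence $\nabla\varphi_\#\mu \leqc \nu$, i.e.\ $\nabla\varphi_\#\mu \in B_\nu$. Next, by compactness of $C(\mu,\nu)$ and lower semi-continuity of the barycentric cost on compact supports, there is an optimal plan $\pi^\star$; writing $(X,Y) \sim \pi^\star$ and $\bar{y}(x) := \E[Y\mid X=x]$, strong duality combined with the chain
\[
Q_2 f^\circ(x) \leq f^\circ(\bar{y}(x)) + |\bar{y}(x)-x|^2 \leq \int f^\circ\,dp_x + |\bar{y}(x)-x|^2
\]
(first by the definition of $Q_2 f^\circ$, second by Jensen applied to the convex function $f^\circ$) collapses into equalities $\mu$-a.s.\ after integration against $\mu$, using the martingale property $\bar{y}_\#\mu \in B_\nu$. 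The uniqueness of the minimizer of $y \mapsto f^\circ(y) + |y-x|^2$ then forces $\bar{y}(x) = \nabla\varphi(x)$ $\mu$-a.s.; comparing $\int|\bar{y}(x)-x|^2\,d\mu = \overline{\mathcal{T}}_2(\nu|\mu)$ with the variational characterization of $\bar{\mu}$ in Proposition \ref{prop:projection} identifies $\bar{y}_\#\mu = \bar{\mu}$, so $\bar{\mu} = \nabla\varphi_\#\mu$, as required.
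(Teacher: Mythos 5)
The structure you propose (maximizing sequence, uniform Lipschitz reduction, Ascoli, then first variation) is indeed the paper's scheme, but the step you yourself flag as ``the main obstacle'' --- that one may replace each $f_n$ by a convex $4R$-Lipschitz function without decreasing $J$ --- is asserted rather than proved, and the justification offered is circular. For a fixed convex Lipschitz $f$ the minimizer in $Q_2f(x)=\inf_y\{f(y)+|y-x|^2\}$ lies within distance comparable to the Lipschitz constant of $f$ from $x$; along a maximizing sequence these constants are a priori unbounded, so there is no uniform ``bounded region'' and $J$ does \emph{not} depend only on the values of $f$ on a fixed compact set. Moreover the Pasch--Hausdorff envelope $\tilde f=f\,\square\,4R|\cdot|$ satisfies $Q_2\tilde f=Q_\theta f\le Q_2f$, where $\theta(u)=\alpha(|u|)$ with $\alpha(t)=t^2$ for $t\le 2R$ and $\alpha(t)=4Rt-4R^2$ beyond; hence $J(\tilde f)-J(f)=\int(Q_\theta f-Q_2f)\,d\mu+\int(f-\tilde f)\,d\nu$ is a difference of a nonpositive and a nonnegative term, and the claimed monotonicity of $J$ under this replacement is not established. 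The paper's resolution is precisely to introduce the truncated cost $\theta$: since the supports lie in a ball of radius $R$, barycentric displacements are at most $2R$, so $\overline{\mathcal{T}}_2(\nu|\mu)=\overline{\mathcal{T}}_\theta(\nu|\mu)$ at the primal level; one then maximizes the \emph{modified} dual functional $J_\theta$, for which the double conjugation $f=P_\theta(Q_\theta g)$ does not decrease $J_\theta$ and automatically produces $4R$-Lipschitz candidates (as suprema of $4R$-Lipschitz functions $y\mapsto Q_\theta g(x)-\theta(y-x)$). Only at the very end does one pass back to $J_2$ via $Q_2f_n\ge Q_\theta f_n$. Without this cost-truncation idea (or an equivalent substitute) your part (a) does not go through.

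Part (b) is essentially sound and close to the paper: Lemma \ref{lem:contraction} gives the $\mathcal{C}^1$ and $1$-Lipschitz properties, and the first variation yields $\nabla\varphi_\#\mu\leqc\nu$ (use Fatou with the affine lower bound on $g$ rather than dominated convergence). Your identification of $\bar\mu$ via complementary slackness against an optimal primal plan is a legitimate alternative to the paper's route (which instead performs a second variation with $u=-f^\circ$ to get $\int f^\circ\,d(\nabla\varphi_\#\mu)=\int f^\circ\,d\nu$ and then compares $W_2^2(\nabla\varphi_\#\mu,\mu)$ with the duality bound), but it requires you to first prove primal attainment for the barycentric cost, which the paper only establishes afterwards in item (c); you should supply that lower semicontinuity/compactness argument if you take this path.
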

\begin{proof}[Proof of Item (a) of Theorem \ref{thm:compact}]
We will assume that $\mu$ and $\nu$ are supported in a closed ball $B$ of radius $R>0$ centered at $0.$ Let $\alpha$ be the convex function on $[0,\infty)$ defined by
\[
\alpha(t) = t^2 \text{ if } t\in [0,2R]\qquad \text{ and }\qquad \alpha(t) = 4Rt-4R^2 \text{ if } t\geq 2R. 
\] 
and let $\theta(u)=\alpha(|u|)$, for all $u\in \R^d.$ Since $\mu$ and $\nu$ are supported in $B$ and we have the duality statement Theorem \ref{them:duality} it is easily seen that 

\begin{align}\label{eq:dual}
\overline{\mathcal{T}}_2(\nu|\mu)=\overline{\mathcal {T}}_{\theta}(\nu|\mu) = \sup_{g\in \mathcal{G}} \left\{\int Q_{\theta}g\,d\mu - \int g\,d\nu\right\},
\end{align}
where the supremum runs over the set $\mathcal{G}$ of convex functions $g$ bounded from below.

\noindent \textit{Step 1 - Preparation.} 
Let $P_{\theta}$ be the operator acting on functions defined as follows
\[
P_{\theta}g(y) = \sup_{x\in \R^d}\{g(x) - \theta(y-x)\},\qquad y\in \R^d.
\]
First, let us show that we can refine the duality formula \eqref{eq:dual} by modifying the class of functions $\mathcal{G}$ as follows:
\[
\overline{\mathcal {T}}_2(\nu|\mu) = \sup_{f \in \mathcal{F}} \left\{ \int Q_{\theta}f\,d\mu - \int f \,d\nu\right\},
\]
where $\mathcal{F}$ is the subset of $\mathcal{G}$ of functions $f$ which are convex, bounded from below, and satisfy both $f(0)=0$ and $f=P_{\theta}(Q_{\theta}f)$. Indeed, let $g$ be an element of $\mathcal{G}$. Then, by definition of $Q_{\theta}$, it holds
\[
Q_{\theta}g(x)-\theta(y-x) \leq g(y),\qquad \forall x,y\in \R^d
\]
and so $g\geq P_{\theta}(Q_{\theta}(g))$. Since $Q_{\theta}g$ is convex (as an infimum convolution of convex functions), the function $f$ defined by
\[
f(y) = P_{\theta}(Q_{\theta}g)(y) = \sup_{u\in \R^d}\{Q_{\theta}g(y-u) - \theta(u)\},\qquad y\in \R^d
\]
is also convex as a supremum of convex functions and bounded from below because $P_\theta$ and $Q_\theta$ preserve this property. Moreover, it is easily seen that $Q_{\theta}f=Q_{\theta}g$ and so $P_{\theta}(Q_{\theta}f)=f$. Therefore,
\[
\int Q_{\theta}g\,d\mu - \int g\,d\nu \leq \int Q_{\theta}f\,d\mu - \int f\,d\nu=\int Q_{\theta}(f-f(0))\,d\mu - \int (f-f(0))\,d\nu,
\]
which shows that the duality formula can be restricted to functions $f\in \mathcal{F}$.

\medskip

\noindent \textit{Step 2 - Dual attainment.} 
Now, let us show that there is some convex function $f^\circ$ satisfying \eqref{eq:attainment}.
First of all, if $f \in \mathcal{F}$, then $f$ is $4R$-Lipschitz. This comes from the fact that
\[
f(y) = \sup_{x\in \R^d} \{Q_{\theta}f(x) - \theta(y-x)\},\qquad \forall y\in \R^d
\]
which (the function $y\mapsto \theta(y-x)$ being $4R$-Lipschitz on $\R^d$ for every $x\in \R^d$) shows that $f$ is a supremum of $4R$-Lipschitz functions and is thus $4R$-Lipschitz itself. Since $f(0)=0$, this implies in particular that $|f| \leq 20R^2$ on $B'=5B$ (the ball of radius $5R$ centered at $0$).
Also, since $f$ is $4R$-Lipschitz, it holds
\[
f(y)+|y-x|^2\geq f(x) - 4R|y-x|+|y-x|^2 \qquad \forall x,y\in \R^d.
\]
Therefore, $f(y) + |y-x|^2 \geq f(x)$ whenever $|y-x|\geq 4R.$ Since $Q_2f(x)\leq f(x)$, this implies that 
\[
Q_2f(x) = \inf_{|y-x|\leq 4R} \{f(y) + |y-x|^2\},\qquad \forall x\in \R^d.
\]
Now, let $f_n \in \mathcal{F}$ be some minimizing sequence. The functions $f_n$ are $4R$-Lipschitz and uniformly bounded on the ball $B'$. Therefore, it follows from Ascoli's theorem that $f_n$ admits a sub-sequence (still denoted $f_n$ in what follows) converging uniformly to some $f^\circ$ on $B'.$ This function $f^\circ$ is convex on $B'$ as a pointwise limit of convex functions. It is easily seen that $Q_2f_n \to Q_2f^\circ$ (uniformly) on $B$. Since $Q_2f_n\geq Q_{\theta}f_n$ pointwise, it holds
\[
\overline{\mathcal{T}_2}(\nu|\mu) \geq \int Q_2f_n\,d\mu - \int f_n\,d\nu \geq \int Q_{\theta}f_n\,d\mu - \int f_n\,d\nu.
\]
Since the right hand side goes to $\overline{\mathcal{T}_2}(\nu|\mu)$ and $\mu,\nu$ are supported in $B$, letting $n\to \infty$ yields to \eqref{eq:attainment}. For the moment the convex function $f^\circ$ is defined only on $B'$ but it can be easily extended outside $B'$ as follows: the function $\tilde{f}(x) = \inf_{y \in B'}\{f^\circ(y) + 4R|x-y|\}$, $x \in \R^d$, is convex as an infimum convolution of two convex functions, and since $f^\circ$ is $4R$-Lipschitz it is easily seen that $\tilde{f} = f^\circ$ on $B'$. We can thus assume that $f^\circ$ is a finite valued  convex and bounded from below function defined on the whole $\R^d$. This completes the proof.
\end{proof}

\medskip
In order to prove Item (b) of Theorem \ref{thm:compact}, we will need a technical lemma  adapted from \cite[Lemma 2.4]{Gangbo94}.
\begin{lem}\label{Gangbo}
Let $f:\R^d\to \R\cup\{+\infty\}$ be a lower semi-continuous convex function and let $h(x)=\frac{f(x)+|x|^2}{2}$, $x\in \R^d$.
\begin{itemize}
\item[(a)] The function $h^*$ is $\mathcal{C}^1$ smooth and it holds 
\begin{equation}\label{eq:Gangbo}
Q_2f(x) = f(\nabla h^*(x))+|\nabla h^*(x)-x|^2,\qquad \forall x \in \R^d.
\end{equation}
\item[(b)] For all continuous function $u:\R^d\to \R$ such that $u+f$ is convex and such that $u(x)\geq -a |x| - b$, $x\in \R^d$, for some $a,b \geq0$ it holds
\begin{equation}\label{eq:Gangbolim}
\lim_{t\to 0^+}\frac{Q_2(f+tu)(x) - Q_2(f)(x)}{t} =u(\nabla h^*)(x),\qquad \forall x\in \R^d.
\end{equation}
Furthermore, there exist some $\alpha,\beta\geq0$ such that  
\[
\frac{Q_2(f+tu)(x) - Q_2(f)(x)}{t} \geq - \alpha |x| -\beta,\qquad \forall x \in \R^d,\qquad \forall t \in ]0,1].
\]
These conclusions also hold for $u_k = -\min(f,k)$, $k\in \N$ in place of $u$ (even though $u_k$ is not necessarily continuous).
\end{itemize}
\end{lem}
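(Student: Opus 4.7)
The approach is to turn the infimum defining $Q_2 f$ into a Legendre transform by completing the square. Expanding $|y-x|^2 = |y|^2 - 2\langle x,y\rangle + |x|^2$, one obtains
\[
Q_2 f(x) = |x|^2 + \inf_{y\in \R^d}\{f(y) + |y|^2 - 2\langle x, y\rangle\} = |x|^2 - 2h^*(x).
\]
The function $y\mapsto h(y) - |y|^2/2 = f(y)/2$ being convex, Lemma~\ref{lem:contraction} (implication $(a)\Rightarrow (c)$) will yield that $h^*$ is $\mathcal{C}^1$ on $\R^d$ with $1$-Lipschitz gradient. Strict convexity of $h$ then identifies $y^\star := \nabla h^*(x)$ as the unique maximizer in the supremum defining $h^*(x)$, so by Fenchel--Young $h^*(x) = \langle x, y^\star\rangle - h(y^\star)$; substituting into the preceding identity produces $Q_2 f(x) = f(y^\star) + |y^\star - x|^2$, which is \eqref{eq:Gangbo}.

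\textbf{Part (b): sandwich argument.}
For (b) I would fix $x\in \R^d$ and set $g_t(y) = f(y) + tu(y) + |y-x|^2$; strict convexity of $g_t$ comes from the quadratic term, and combining $|y-x|^2$ with $u\geq -a|\cdot|-b$ yields coercivity, so $g_t$ admits a unique minimizer $y_t$, with $y_0 = y^\star$ by~(a). The plan is to pinch the difference quotient between two simple quantities: on the one hand, $Q_2(f+tu)(x) \leq g_t(y^\star) = Q_2 f(x) + tu(y^\star)$ gives $\limsup_{t\to 0^+}\frac{Q_2(f+tu)(x) - Q_2 f(x)}{t} \leq u(y^\star)$, while on the other hand $Q_2(f+tu)(x) = g_0(y_t) + tu(y_t) \geq g_0(y^\star) + tu(y_t)$ gives $\liminf \geq \liminf_{t\to 0^+} u(y_t)$. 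It therefore suffices to establish the two convergences $y_t \to y^\star$ and $u(y_t) \to u(y^\star)$ as $t\to 0^+$.

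\textbf{Minimizer control and the $u_k$ case.}
From $g_0(y_t) \leq g_0(y^\star) + t(u(y^\star) - u(y_t))$, the quadratic growth of $g_0$ together with the bound $u\geq -a|\cdot|-b$ should produce a uniform-in-$t\in[0,1]$ estimate $|y_t| \leq \alpha_0 |x| + \beta_0$. Any cluster point of $(y_t)_{t\to 0^+}$ will then be a minimizer of $g_0$ by lower semi-continuity, hence coincide with $y^\star$ by uniqueness; consequently $y_t \to y^\star$, $g_0(y_t) \to g_0(y^\star)$, and so $f(y_t) \to f(y^\star)$. Continuity of $u$ will then give $u(y_t) \to u(y^\star)$, proving \eqref{eq:Gangbolim}, and the affine bound $|y_t| \leq \alpha_0|x|+\beta_0$ transferred through $u\geq -a|\cdot|-b$ will yield the required uniform lower bound $u(y_t) \geq -\alpha|x|-\beta$. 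The case $u_k = -\min(f,k)$ should go through with only a cosmetic change: $f+tu_k = \max((1-t)f,\, f - tk)$ remains convex and lower semi-continuous, so the minimizer analysis applies verbatim, and although $u_k$ is only upper semi-continuous, the convergence $u_k(y_t) \to u_k(y^\star)$ follows from the already-established $f(y_t)\to f(y^\star)$.

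\textbf{Expected main obstacle.}
The delicate point I anticipate is the uniform bound $|y_t|\leq\alpha_0|x|+\beta_0$: the constants must be independent of $t$ and must cleanly absorb the linear growth of $u$ against the quadratic coercivity of $|y-x|^2+f(y)$, which is only known up to an affine lower bound. Once this coercivity estimate is secured, the rest of the proof is a routine first-variation argument combined with standard convex-analysis convergence.
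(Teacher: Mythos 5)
Your proposal is correct and follows essentially the same route as the paper: completing the square to identify $Q_2f(x)=|x|^2-2h^*(x)$ and invoking Lemma~\ref{lem:contraction} for (a), then a sandwich of the difference quotient between $u(y^\star)$ and $u(y_t)$ (with $y_t=\nabla h_t^*(x)$ in the paper's notation), convergence of the minimizers via boundedness plus uniqueness of cluster points, and recovery of $u_k(y_t)\to u_k(y^\star)$ from $f(y_t)\to f(y^\star)$. The only (valid) variation is that you obtain the uniform affine bound $|y_t|\leq\alpha_0|x|+\beta_0$ directly from coercivity of $f+tu+|\cdot-x|^2$, whereas the paper deduces it from the $1$-Lipschitz continuity of $\nabla h_t^*$ via $|\nabla h_t^*(x)|\leq|x|+|\nabla h_t^*(0)|$.
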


Let us admit the lemma until page \pageref{proof_of_lemma} where the proof is postponed and first prove Item (b) of Theorem \ref{thm:compact}.

\begin{proof}[Proof of Item (b) of Theorem \ref{thm:compact}] According to Item (a) of Lemma \ref{Gangbo}, the function $\varphi=h^*$ is of class $\mathcal{C}^1$. The fact that $\nabla \varphi$ is $1$-Lipschitz follows from Lemma \ref{lem:contraction}. Set $\tilde{\mu} = \nabla \varphi_\# \mu$ and let us show that $\tilde{\mu} = \bar{\mu}$. First let us prove that $\tilde{\mu} \preceq_c \nu$.
Let $u:\R^d\to \R$ be some arbitrary convex function. 
By optimality of $f^\circ$ it holds, for all $t>0$,
\[
\int Q_2(f^\circ+tu)\,d\mu - \int (f^\circ+tu)\,d\nu \leq \int Q_2(f^\circ)\,d\mu - \int f^\circ\,d\nu
\]
Therefore, for all $t>0$, 
\[
\int \frac{Q_2(f^\circ+tu) - Q_2(f^\circ)}{t}\,d\mu \leq \int u\,d\nu.
\]
Since $u$ is bounded from below by some affine function it satisfies the assumption of Item (b) of Lemma \ref{Gangbo} and one concludes using Fatou's Lemma that
\[
\int u(\nabla \varphi)\,d\mu \leq \int u\,d\nu
\]
for all convex function $u:\R^d\to \R$. This shows $\tilde{\mu} \leqc \nu$. In particular, one gets $\int f^\circ\,d\tilde{\mu} \leq \int f^\circ\,d\nu$. Actually, for this special function, equality holds. Indeed, the function $u = -f^\circ$ is Lipschitz according to Item (a) of Theorem \ref{thm:compact} and such that $f^\circ+t u$ is convex for all $0\leq t\leq 1$. So
it satisfies the assumptions of Item (b) of Lemma \ref{Gangbo}. Reasoning as above gives us $\int f^\circ\,d\tilde{\mu} \geq \int f^\circ\,d\nu,$ which shows equality. Therefore,
\begin{align*}
\overline{\mathcal{T}}_2(\nu|\mu) & = \int Q_2f^\circ\,d\mu - \int f^\circ\,d\nu \\
& = \int f^\circ(\nabla \varphi(x))+|\nabla \varphi(x)-x|^2\,d\mu(x) - \int f^\circ(y)\,d\nu(y)\\
& = \int |\nabla \varphi(x)-x|^2\,d\mu(x)\\
& \geq W_2^2(\tilde{\mu},\mu).
\end{align*}
Finally, if $\eta \leqc \nu$, then 
\begin{align*}
\overline{\mathcal{T}}_2(\nu|\mu) &= \sup_{g \text{ convex }} \left\{ \int Q_2 g\,d\mu - \int g\,d\nu\right\}\\
& \leq \sup_{g \text{ convex }} \left\{ \int Q_2 g\,d\mu - \int g\,d\eta\right\}\\
& \leq W_2^2(\mu,\eta),
\end{align*}
where the last inequality follows easily from the inequality $Q_2g(x) - g(y) \leq |y-x|^2$, $x,y \in \R^d.$
In particular taking $\eta = \tilde{\mu}$ shows that 
\[
\overline{\mathcal{T}}_2(\nu|\mu) = W_2^2(\tilde{\mu},\mu) = \inf_{\eta \leqc \nu} W_2^2(\eta,\mu)
\]
and completes the proof.
\end{proof}

Now let us prove Lemma \ref{Gangbo}.
\proof[Proof of Lemma \ref{Gangbo}]\label{proof_of_lemma}
Let $u:\R^d \to \R$ be a function such that $u+f$ is convex and lower semi-continuous and $u(x) \geq -a |x| - b$, $x \in \R^d$, for some $a,b\geq0.$  Note that for the moment $u$ is not necessarily continuous.

\noindent (a) For all $t \in [0,1]$, let $h_t$ be the lower semi-continuous function defined by 
\[
h_t(x) =  \frac{1}{2} \left((1-t) f(x) + t (u(x)+f(x))+ |x|^2\right),\qquad x \in \R^d,
\]
with thus $h_0=h.$
 Applying  Lemma \ref{lem:contraction} to $g = h_t$ gives that $h_t^*$ is of class $\mathcal{C}^1$ on $\R^d$ and $\nabla h_t^*$ is $1$-Lipschitz. 
Moreover, 
\begin{align*}
Q_2(f+tu)(x) & = \inf_{y\in \R^d}\{f(y) +tu(y) + |x-y|^2\}\\
& =|x|^2 - 2\sup_{y\in \R^d}\{x\cdot y -  h_t(y) \} =  |x|^2-2h_t^*(x)
\end{align*}
and one sees that $y$ is optimal  if and only if $x\cdot y = h_t(y) + h_t^*(x)$, that is to say if and only if $y \in \partial h_t^*(x) = \{\nabla h_t^*(x)\}$ (see \textit{e.g} \cite[Corollary E 1.4.4]{HUL01}). So it holds, for all $t\in[0,1]$
\begin{equation}\label{eq:Gangbo2}
Q_2(f+tu)(x) = f(\nabla h_t^*(x)) + tu (\nabla h_t^*(x))+ |x-\nabla h_t^*(x)|^2,\qquad \forall x \in \R^d,
\end{equation}
which gives in particular \eqref{eq:Gangbo} for $t=0.$
\medskip

\noindent (b) Observe that, according to \eqref{eq:Gangbo}, it holds
\begin{align*}
Q_2(f+tu)(x) &= \inf_{y\in \R^d} \{(f+tu)(y) + |x-y|^2\}\\
 &\leq (f+tu)(\nabla h^*(x)) + |x- \nabla h^*(x)|^2\\
& = Q_2f(x) + tu(\nabla h^*(x))
\end{align*}
and so
\begin{equation}\label{eq:limsup}
 \frac{Q_2(f+tu)(x) - Q_2f(x)}{t} \leq u(\nabla h^*(x)),\qquad \forall x\in \R^d, \forall t\in (0,1].
\end{equation}
Similarly, using \eqref{eq:Gangbo2}, one easily gets
\begin{equation}\label{eq:Binf}
 \frac{Q_2(f+tu)(x) - Q_2f(x)}{t} \geq u(\nabla h_t^*(x)),\qquad \forall x\in \R^d, \forall t\in (0,1].
\end{equation}
Let us show that for any fixed $x \in \R^d$,  $\nabla h_t^*(x) \to \nabla h^*(x)$ as $t\to 0^+$.
The function $f$ being convex it is bounded from below by some affine function. Using the lower bound on $u$, it is easily seen that there exist some constants $c,c' \geq 0$, such that 
\[
f(y) + tu(y) \geq -c |y| - c',\qquad \forall y\in \R^d.
\]
Therefore, since $Q_2 (f+tu)(x) \leq f(x)+tu(x) \leq |f|(x) + |u|(x)$, Identity \eqref{eq:Gangbo2} gives that
\[
 |f|(x) + |u|(x) \geq -c |\nabla h_t^*|(x) - c' +|x- \nabla h_t^*(x)|^2.
\]
Therefore, $\sup_{t\in [0,1]} |\nabla h_t^*|(x) <+\infty.$ It follows that if $t_n \in (0,1]$ is some sequence converging to $0$, the sequence $\nabla h_{t_n}^*(x)$ admits a subsequence (still denoted $\nabla h_{t_n}^*(x)$ for simplicity) converging to some $\ell(x)$. According to \eqref{eq:limsup} and \eqref{eq:Binf} and the lower bound on $u$, $Q_2(f+t_nu)(x) \to Q_2f(x)$. On the other hand, 
\begin{equation}\label{eq:Gangbo3}
Q_2(f+t_n u)(x) = f(\nabla h_{t_n}^*(x)) + |x-\nabla h_{t_n}^*(x)|^2 + t_n u(\nabla h_{t_n}^*(x))
\end{equation}
and so taking the $\liminf$, it is easily seen (using the lower bound on $u$) that
\[
\liminf_{n\to +\infty}Q_2(f+t_n u)(x) \geq  \liminf_{n\to +\infty}f(\nabla h_{t_n}^*(x)) + |x-\ell(x)|^2 \geq f(\ell(x)) + |x-\ell(x)|^2.
\]  
So we get $Q_2f(x) \geq  f(\ell(x)) + |x-\ell(x)|^2$ and thus $Q_2f(x) =  f(\ell(x)) + |x-\ell(x)|^2$ which implies according to the proof of Item (a) that $\ell(x) = \nabla h^*(x).$ It follows that $\nabla h^*(x)$ is the unique limit point of the family $(\nabla h_{t}^*(x))_{t \in (0,1]}$ when $t \to 0^+$, and so $\nabla h_{t}^*(x) \to \nabla h^*(x)$ as $t \to 0^+.$ 
To prove \eqref{eq:Gangbolim}, let us discuss the different cases.
\begin{itemize}
\item If $u$ is continuous, then taking the limit in \eqref{eq:limsup} and \eqref{eq:Binf} gives \eqref{eq:Gangbolim}.
\item If $u = -\min(f;k)$ for some $k\in \N$, then $u+f = \max(0 ; f-k)$ is clearly convex and lower semi-continuous and $u \geq - k$. Also, since $f$ is lower bounded by some affine function, the function $u$ is bounded from above by a function of the form $x\mapsto  e|x| + e'$, for some $e,e'\geq0.$  Therefore, if $t_n \to 0$, then $ t_n u(\nabla h_{t_n}^*(x))
 \to 0$ and so, taking the limit in \eqref{eq:Gangbo3}, one easily gets that sees that $f(\nabla h_{t_n}^*(x)) \to f(\nabla h^*(x))$ and so, by definition of $u$, $u(\nabla h_{t_n}^*(x)) \to u(\nabla h^*(x))$ as $n\to +\infty.$ Taking the limit in \eqref{eq:limsup} and \eqref{eq:Binf} gives \eqref{eq:Gangbolim} as above.
\end{itemize}
Finally, in both cases, since $x\mapsto \nabla h_t^*(x)$ is $1$-Lipschitz, \eqref{eq:Binf} gives that
\[
 \frac{Q_2(f+tu)(x) - Q_2f(x)}{t} \geq u(\nabla h_t^*(x)) \geq - a | \nabla h_t^*(x) | - b \geq - a |x| - a|\nabla h_t^*(0)| -b
\]
and since $\sup_{t\in [0,1]} |\nabla h_t^*|(0) <+\infty$ this completes the proof.
\endproof

\subsubsection{Proof of Theorem \ref{main-result} in the general case}
We follow the same path as before and first show the existence of a dual optimizer. As explained in Remark \ref{rem:deux_versions}, another proof has been proposed in \cite{BBP}. It relies on the fact that Theorem \ref{main-result} on measures with finite second moments can be built by starting from Theorem \ref{thm:compact} on compact measures.
\begin{proof}[Proof of Item (a) of Theorem \ref{main-result}]
First let us justify that $\int Q_2f \,d\mu$ is finite for any lower semi-continuous convex function $f:\R^d \to \R\cup \{+\infty\}$ such that $f$ is integrable with respect to $\nu.$
Indeed, selecting some $y_o$ such that $f(y_o)$ is finite one first get that $Q_2f(x) \leq  f(y_o) + |y_o-x|^2$, $x\in \R$. On the other hand, the function $Q_2 f$ is convex on $\R^d$ as an infimum convolution of two convex functions, therefore it is bounded from below by some affine function.  Since $\mu \in \mathcal{P}_2(\R^d)$, this shows that $Q_2 f$ is integrable with respect to $\mu.$ 

Without loss of generality we assume that $\int x\, d\nu(x)=0$. According to \cite[Theorem 2.11 (3)]{GRST17}, 
\[
\overline{\mathcal{T}}_2(\nu |\mu) = \sup_{f} \left\{ \int Q_2  f\,d\mu - \int  f\,d\nu \right\},
\]
where the infimum runs over the set of convex functions which are Lipschitz and bounded from below. Without loss of generality one can also impose that $f(0)=0.$ Let $(f_n)_{n\in \N}$ be a maximizing sequence and let $g_n = Q_2f_n$. Note that, since $f_n$ is convex, $g_n$ is also convex as an infimum convolution of two convex functions.

Let us show that for all $x \in \R^d$, the sequence $g_n(x)$ is bounded.
First of all, observe that, for all $n$, it holds (choosing $y=0$)
\begin{equation}\label{eq:existence-1}
g_n(x)=Q_2f_n(x) =\inf_{y\in \R^d} \{ f_n(y) + |y-x|^2\} \leq |x|^2,\qquad \forall x \in \R^d,
\end{equation}
which shows that for all $x \in \R^d$, the sequence $g_n(x)$ is bounded from above.
On the other hand, since $|x|^2 - g_n(x) = \sup_{y\in \R^d} \{2x\cdot y -(f_n(y)+|y|^2)\}$, it follows that the function $|x|^2 - g_n(x)$ is convex as a supremum of convex functions. Therefore, denoting by $b_\mu = \int x\,d\mu$, Jensen inequality yields
\[
\int |x|^2 - g_n(x) \,d\mu(x) \geq |b_\mu|^2 - g_n(b_\mu)
\] 
and so
\[
g_n \left(b_\mu\right) \geq  |b_\mu|^2 -\int |x|^2 \,d\mu(x) + \int g_n(x)\,d\mu(x).
\]
Now, observe that $\int g_n\,d\mu \geq  \int g_n\,d\mu - \int f_n\,d\nu$, since according to Jensen inequality it holds
\[
\int f_n\,d\nu \geq f_n\left(\int x\,d\nu \right) = f_n(0)=0.
\]
Since the sequence $\int g_n\,d\mu - \int f_n\,d\nu$ converges to $\overline{\mathcal{T}}_2(\nu |\mu)$ it is bounded from below. This implies that the sequence $g_n(b_\mu)$ is also bounded from below. Now let $x$ be an arbitrary point in $\R^d$ and write $ b_\mu = \frac{1}{2} x + \frac{1}{2} (2b_\mu-x)$, then the convexity of $g_n$ and \eqref{eq:existence-1} yield
\[
g_n(b_\mu) \leq \frac{1}{2}g_n(x) +\frac{1}{2}g_n(2b_\mu-x) \leq \frac{1}{2}g_n(x) +\frac{1}{2}|2b_\mu-x|^2 
\]
and so the sequence $g_n(x)$ is bounded from below.

The sequence of convex functions $g_n$ thus satisfies the following boundedness properties:
\[
-\infty <\inf_{n \in \N} g_n(x)\qquad \text{and}\qquad \sup_{n \in \N} g_n(x) <+ \infty,\qquad \forall x \in \R^d.
\]
According to \cite[Theorem 10.9 (p.90)]{Rockafellar70}, it is possible to extract a subsequence from $g_n$ (that we will still denote by $g_n$) which converges pointwise to a convex function $g:\R^d \to \R$ (the convergence is also uniform on every compact set, but this will not be needed in the sequel).

According to \eqref{eq:existence-1}, $g_n(x) \leq |x|^2$ and $\int |x|^2\,d\mu <+\infty$, so on the one hand Fatou's lemma yields
\[
\limsup_{n\to +\infty} \int g_n\,d\mu \leq \int \limsup_{n\to +\infty}g_n\,d\mu = \int g\,d\mu <+\infty.
\]
On the other hand,
\begin{equation}\label{eq:existence-2}
f_n(x) \geq g_n(y) -|y-x|^2,\qquad \forall x,y\in \R^d.
\end{equation}
Since for a fixed $y_o$, the sequence $g_n(y_o)$ is bounded from below and $\int |y_o-x|^2\,d\nu(x)<+\infty$, Fatou's lemma provides
\[
\liminf_{n\to +\infty} \int f_n\,d\nu \geq \int \liminf_{n\to +\infty} f_n \,d\nu .
\]
Moreover, taking the $\liminf$ in \eqref{eq:existence-2} and then optimizing over $y \in \R^d$ gives
\[
\liminf_{n\to +\infty} f_n(x) \geq P_2g(x)= \sup_{y\in \R^d}\{g(y) - |y-x|^2\},\qquad \forall x\in \R^d.
\]
Finally,
\begin{align*}
\overline{\mathcal{T}}_2(\nu|\mu) & = \limsup_{n\to +\infty} \left(\int g_n \,d\mu - \int f_n \,d\mu \right) \\
& \leq \limsup_{n\to +\infty} \int g_n \,d\mu - \liminf_{n\to +\infty}\int f_n \,d\mu \\
& \leq \int g\,d\mu - \int P_2g\,d\nu.
\end{align*}
Let $f^\circ$ be the function $x\mapsto P_2g(x) =  \sup_{u\in \R^d}\{g(x+u) - |u|^2\} \in \R\cup\{+\infty\}$, $x \in \R^d$. This function is convex and lower semi-continuous as a supremum of convex and continuous functions. Since $g(x) \leq f^\circ(y)+ |y-x|^2$ for all $y \in \R^d$, optimizing over $y\in \R^d$ yields $g \leq Q_2 f^\circ$. 
Therefore,
\[
\int Q_2 f^\circ\,d\mu - \int f^\circ \,d\nu \geq \overline{\mathcal{T}}_2(\nu|\mu),
\]
which completes the proof.
\end{proof}

\begin{proof}[Proof of Item (b) of Theorem \ref{main-result}] The beginning of the proof of Item (b) of Theorem \ref{thm:compact} can be repeated exactly as before and yields to the conclusion that $\bar \mu := \nabla \varphi_\# \mu \leqc \nu.$ This shows in particular that $\int f^\circ\,d\bar \mu \leq \int f^\circ \,d\nu.$ To show that this is actually an equality, it is no longer possible to take $u=- f^\circ$, since the function $f^\circ$ is not necessarily Lipschitz. Instead, let us take $u_k = - \min( f^\circ ; k)$. The function $u_k$ is such that $f^\circ+tu_k$ is convex for all $t \in [0,1]$, so it is admissible to perform first variation in the optimization problem. Applying Lemma \ref{Gangbo} and reasoning as in the proof of Theorem \ref{thm:compact}, one sees that $\int f^\circ \,d\bar \mu \geq  \int \min( f^\circ,k) \,d\bar \mu \geq \int \min(f^\circ,k) \,d\nu$. Letting $k$ go to $+\infty$ gives the desired equality. The rest of the proof remains unchanged.
\end{proof}

\begin{proof}[Proof of Item (c) of Theorem \ref{main-result}] \label{proof:c}
Since the probability $\bar{\mu}$ given by Proposition \ref{prop:projection} is dominated by $\nu$ for the convex order, Strassen Theorem implies that there exists a transport kernel $q$ such that $\int y\,dq_x(y)=x$ for $\bar{\mu}$ almost every $x$ and $\nu(\,\cdot\,) = \int q_x(\,\cdot\,)\,d\bar{\mu}(x)$. Let $(X,\bar{X},Y)$ be a time inhomogeneous Markov chain with initial distribution $\mu$ and $\mathrm{Law}(\bar{X} | X) = \delta_{\nabla \varphi(X)}$ and $\mathrm{Law}(Y | \bar{X}) = q_{\bar{X}}$ almost surely, where $\nabla \varphi$ is the transport map given in Theorem \ref{main-result}. Then it holds
\begin{align*}
\E[|\E[Y|X]-X|^2] &= \E\left[\left|\E\left[\int y\,dq_{\bar{X}}(y)| X\right]-X\right|^2\right] = \E\left[\left|\int y\,dq_{\nabla \varphi(X)}(y)-X\right|^2\right]\\& = \E\left[\left|\nabla \varphi(X)-X\right|^2\right] = W_2^2(\bar{\mu},\mu) = \overline{\mathcal{T}}_2(\nu|\mu),
\end{align*}
where the last two equalities come respectively from \eqref{eq:gradphiopti} and Proposition \ref{prop:projection}. This shows the optimality of $(X,Y).$

Now let $d\pi(x,y)=d\mu(x)dp_x(y)$ be the law of the coupling $(X,Y)$ constructed above (with therefore $dp_x(y) = dq_{\nabla \varphi(x)}(y)$) let $d\pi'(x,y)=d\mu(x) dp'_x(y)$ be another weak optimal transport plan, then
\begin{align*}
\int \left | \int  y\,d\left(\frac{p_x+p'_x}{2}\right)(y) - x \right|^2 d\mu(x)\leq \int \frac{|\int y \,d p_x(y)-x|^2+|\int y \,d p'_x(y)-x|^2}{2}\,d\mu(x) = \overline{\mathcal{T}}_2(\nu|\mu).
\end{align*}
By optimality and strict convexity of $|\,\cdot\,|^2$ we deduce that $\int y\,dp_x=\int y\,dp'_x$ for $\mu$-almost every $x$. In other words,  $\E[Y'|X'=u]=\E[Y|X=u]=\nabla \varphi(u)$ for $\mu$ almost every $u\in \R^d.$  So if $(X',Y')$ is a weak optimal coupling, one has $\E[Y'|X'] = \nabla \varphi(X')$ almost surely. In particular $\E[Y'|X'] \sim \bar{\mu}$. The fact that $(\E[Y'|X'], Y')$ is a martingale is always true.
\end{proof}

\subsection{Proof of the technical Lemma \ref{lem:contraction}} For the sake of completeness, we recall the proof of this classical duality result for strongly convex functions. 

\proof[Proof of Lemma \ref{lem:contraction}]
Let us show that (a) implies (b). 
First let us show that $g^*$ is finite valued. If $x_o$ is some point where $g(x_o)<+\infty$, then it holds  $g^*(y) = \sup_{x\in \R^d} \{x\cdot y -g(x)\} \geq x_o\cdot y - g(x_o)$. This shows that $g^*$ does not take the value $-\infty$. On the other hand, the function $f(x) = g(x) - \frac{|x|^2}{2}$, $x \in \R^d$, is convex by assumption. So there exists some $a\in \R^d$ and $b\in \R$ such that $f(x) \geq a\cdot x+ b$ for all $x \in \R^d.$ It follows from this that $g(x) = f(x) + \frac{|x|^2}{2} \geq a\cdot x+ b + \frac{|x|^2}{2}$, $x \in \R^d$, which easily implies that $g^*(y)<+\infty$ for all $y \in \R^d.$ 
Since $g = f+ \frac{|\,\cdot\,|^2}{2}$ with $f$ convex, it follows from \textit{e.g.} \cite[Theorem E.2.3.1]{HUL01} that the convex conjugate of $g$ is given by 
\[
g^*(y) = \inf_{x\in \R^d} \left\{ f^*(x) + \frac{|y-x|^2}{2}\right\} = \frac{|y|^2}{2} - \sup_{x\in \R^d} \left\{ x\cdot y - \left(f^*(x) + \frac{|x|^2}{2}\right)\right\}
\]
The function defined by the supremum being clearly convex, it follows that $y\mapsto \frac{|y|^2}{2} - g^*(y)$ is convex on $\R^d$, which shows (b). Conversely, let us show that $(b) \Rightarrow (a)$. Let $k(y) = \frac{|y|^2}{2} - g^*(y)$, $y \in \R^d$, which is convex by assumption. By Fenchel-Legendre duality (see \textit{e.g} \cite[Corollary E 1.3.6]{HUL01}), it holds 
\[
g(x) = \sup_{y \in \R^d} \{x\cdot y - g^*(y)\} = \sup_{y\in \R^d}\left \{ x\cdot y - \frac{|y|^2}{2} +k(y)\right\},\qquad \forall x \in \R^d
\]
and so
\[
g(x) - \frac{|x|^2}{2} = \sup_{y \in \R^d}\left\{ k(y) - \frac{|x-y|^2}{2}\right\} = \sup_{u \in \R^d} \left\{k(x-u) - \frac{|u|^2}{2}\right\}\qquad \forall x \in \R^d.
\]
The function $x\mapsto g(x) - \frac{|x|^2}{2}$ is therefore convex as a supremum of convex functions.

 Now let us show that (a) implies (c). 
 We have already seen above that $g^*$ is finite valued over $\R^d.$ Since $g = f+ \frac{|\,\cdot\,|^2}{2}$ with $f$ convex, the function $g$ is also strictly convex. Therefore, according to \textit{e.g} \cite[Theorem E 4.1.1]{HUL01}, it follows that $g^*$ is of class $\mathcal{C}^1$ on $\R^d.$

 It remains to prove that $\nabla g^*$ is $1$-Lipschitz. Since the function $x\mapsto g(x) - \frac{|x|^2}{2}$ is convex, its subgradient is a monotone operator, which means that
\[
(b-a) \cdot (y-x) \geq |y-x|^2 ,\qquad \forall x,y \in \R^d,\qquad \forall b \in \partial g(y),\qquad  \forall a\in \partial g(x).
\]
Since $u \in \partial g(v)$ is equivalent to $v\in \partial g^*(u) = \{\nabla g^*(u)\}$ (see \textit{e.g.} \cite[Corollary E 1.4.4]{HUL01} and \cite[Corollary D 2.1.4]{HUL01}), the statement above is equivalent to
\[
(\nabla g^*(b) - \nabla g^*(a) ) \cdot (b-a) \geq |\nabla g^*(b) - \nabla g^*(a) |^2,\qquad \forall a,b \in \R^d,
\]
which immediately implies that $\nabla g^*$ is $1$-Lipschitz.
Finally, let us show that (c) implies (b). Since $\nabla g^*$ is $1$-Lipschitz, it holds
\[
(\nabla g^*(y) - \nabla g^*(x) ) \cdot (y-x) \leq |y-x|^2,\qquad \forall x,y \in \R^d,
\]
which easily implies that $x\mapsto \frac{|x|^2}{2} - g^*(x)$ is convex. This completes the proof.
\endproof

\bibliographystyle{abbrv}
\bibliography{basebib_mot}
\end{document}